\DeclareSymbolFont{calletters}{OMS}{cmsy}{m}{n}
\DeclareSymbolFontAlphabet{\mathcal}{calletters}
\def\be{\begin{eqnarray}}
\def\ee{\end{eqnarray}}
\def\b*{\begin{eqnarray*}}
\def\e*{\end{eqnarray*}}
\newtheorem{Theorem}{Theorem}[section]
\newtheorem{Definition}[Theorem]{Definition}
\newtheorem{Proposition}[Theorem]{Proposition}
\newtheorem{Assumption}[Theorem]{Assumption}
\newtheorem{Corollary}[Theorem]{Corollary}
\newtheorem{Remark}[Theorem]{Remark}
\newtheorem{Example}[Theorem]{Example}
\makeatletter \@addtoreset{equation}{section}
\newcommand{\bea}{\begin{eqnarray}}
\newcommand{\bes}{\begin{subequations}}
\newcommand{\ees}{\end{subequations}}
\newcommand{\bgt}{\begin{gather}}
\newcommand{\egt}{\begin{gather}}
\newcommand{\eea}{\end{eqnarray}}
\newcommand{\beaa}{\begin{eqnarray*}}
\newcommand{\eeaa}{\end{eqnarray*}}
\def \E{\mathbb{E}}
\def \F{\mathbb{F}}
\def \P{\mathbb{P}}
\def \Q{\mathbb{Q}}
\def \R{\mathbb{R}}
\def\Ac{{\cal A}}
\def\Fc{{\cal F}}
\def\Hc{{\cal H}}
\def\Pc{{\cal P}}
\def\Cb{\overline{C}}
\def \Om{\Omega}
\def \om{\omega}
\def \eps{\varepsilon}
\def \0{\mathbf{0}}
\def \x{\times}
\def \T{\mathbb{T}}
\def\1{\mathbf{1}}
\def\xr{{\rm x}}
\def\CT{C([0,T])}
\def\DT{D([0,T])}
 \def\Ninfty#1{\|#1\|}    
 \def\Cb{{\mathbb C}}
  \def\vs#1{\vspace{2mm}}
\title{A  ${\mathbb C}^{0,1}$-functional It\^{o}'s formula  and  its  applications in mathematical finance}
\author{
Bruno Bouchard
\footnote{CEREMADE, Universit\'e Paris-Dauphine, PSL, CNRS.  bouchard@ceremade.dauphine.fr. } 
\and 
Gr\'egoire Loeper
\footnote{Monash University, School of Mathematical Sciences \& Centre for Quantitative Finance and Investment Strategies (CQFIS),  gregoire.loeper@monash.edu. CQFIS has been supported by BNP Paribas.}
\and 
Xiaolu Tan
\footnote{Department of Mathematics, The Chinese University of Hong Kong. xiaolu.tan@cuhk.edu.hk.}}
\begin{document}
\maketitle

\begin{abstract} 
	Using Dupire's notion of vertical derivative, we provide a functional (path-dependent) extension of the It\^{o}'s formula of   Gozzi and Russo (2006) that applies to ${C}^{0,1}$-functions of continuous weak Dirichlet processes.
	It is motivated and illustrated by its applications to the hedging or superhedging problems  of path-dependent options in mathematical finance, in particular in the case of model uncertainty.  
\end{abstract}

\section{Introduction}

	Let $X$ be a $\R^d$--valued continuous semimartingale with (unique) decomposition $X = X_0 + M +A$,
	where $M$ is a continuous martingale and $A$ is a finite variation process such that $M_0 = A_0 = 0$.
	Let $f : [0,T]\x \R^d \longrightarrow \R$ be a $C^{1,2}$-function,
	then, It\^{o}'s Lemma says that 
	\begin{eqnarray}\label{eq: intro ito}
		f(t, X_t)
		&=&
		f(0, X_{0})
		+
		\int_{0}^{t} \nabla_{x} f(s, X_{s}) dM_s + \Gamma^{f}_t, ~\mbox{a.s.},
	\end{eqnarray}
	 in which $\nabla_{x} f$ is the gradient in space of $f$, viewed as a line vector, and $\Gamma^{f}$ is a continuous process with finite variation, given by 
	$$
		\Gamma^f_t 
		:=
		\int_{0}^{t} \partial_{t} f(s,X_s) ds 
		+
		\sum_{1 \le i \le d} \int_0^t \nabla_{x^i} f(s, X_s) dA_s
		+
		\frac12  {\sum_{1\le i, j\le d}} \int_{0}^{t} \nabla^2_{x^{ {i}}x^{ {j}}} f(s,X_s) d [X^{ {i}},X^{ {j}} ]_{s}.
	$$
	If we assume in addition that $X$ and $f(\cdot,X)$ are both  local martingales, then $\Gamma^f \equiv 0$, a.s., 
	so that the formula does not involve the partial derivatives $\partial_t f$ and $(\nabla^2_{x^{i}x^{j}}f )_{i,j\le d}$ any more.
	In this case, one might expect that the above formula still holds even if $f $ is only ${C}^{0,1}$. 
	This was in fact achieved by the stochastic calculus  via regularization theory that was developed in \cite{russo1993forward, russo1995generalized, russo1996ito, gozzi2006weak, bandini2017weak, coquet2006natural}.  	\vspace{0.5em}
	
	In this theory,  notions of orthogonal (or zero energy)  and weak Dirichlet processes have been introduced (see below for a precise definition),  
	which generalize respectively the notions of finite variation processes and of  semimartingales.
	It is proved that, for a ${C}^{0,1}$ functions $f$ and a continuous weak Dirichlet process $X$ with finite quadratic variation,
	the decomposition \eqref{eq: intro ito} still holds true for some orthogonal  (or zero energy) process $\Gamma^f$.
	In particular, if $X$ and $f(\cdot,X)$ are both continuous local martingales,
	the orthogonal process  $\Gamma^f$ must vanish,
	so that \eqref{eq: intro ito} reduces to
	\begin{equation} \label{eq: intro replication}
		f(t, X_t)
		~=~
		f(0, X_{0})
		+
		\int_{0}^{t} \nabla_{x} f(s, X_{s}) dX_s,~\mbox{a.s.}
	\end{equation}
	 This is typically the case in mathematical finance under the so-called no free lunch with vanishing risk property, see e.g.~\cite{delbaen2006mathematics}. Such a formula is obviously very useful in many situations where $C^{1,2}$ regularity is difficult to prove, or not true at all. 
	In particular, we refer to \cite{gozzi2006weak} for an application to a verification argument in a stochastic control problem.

	\vspace{0.5em}

	In this paper, our first main objective is to provide an extension of  \eqref{eq: intro ito} and \eqref{eq: intro replication} to the functional (path-dependent) case. 	
	For $\mathbb{C}^{1,2}$-functionals, the functional It\^o's formula 
	 for continuous semimartingales
	has been investigated in \cite{cont2013functional, cosso2014regularization}, using the notion of Dupire's  \cite{dupireito} derivatives.
	For less regular functionals, a step forward in this direction was made in 
	 \cite{saporito2018functional, BT19, BT20a}. 
	The results in \cite{BT19, BT20a} were motivated, respectively, by  a verification argument for the replication of  path-dependent options in a model with market impact and  by an optional decomposition theorem for supermartingales, which in turn was applied to derive original results in the field  of robust hedging in mathematical finance. In the above papers, the functional does not even need to be differentiable in space but is assumed to be concave  in space and non-increasing in time (in a sense that matches the notion of Dupire's derivative), up to a smooth function. These assumptions, which perfectly match the cases of application motivating  \cite{BT19,BT20a}, allows one to show that $\Gamma^{f}$ is non-increasing without complex analysis.   
	It is   restricted to c\`adl\`ag semimartingales in \cite{BT20a} and to continuous semimartingales in \cite[Appendix]{BT19}.  
	 The main objective of \cite{saporito2018functional} is to establish a path-dependent Meyer-Tanaka's formula.
	It has the advantage over \cite{BT19,BT20a} to provide an explicit expression of the non-decreasing process entering the decomposition in terms of local times, but it requires much more regularity.
	
	 {Also} notice that a weaker notion of differentiability of path-dependent functionals has been used in  \cite{ekren2014viscosity, ekren2016viscosity} to define the viscosity solutions of path-dependent PDEs (see also \cite{ren2014overview} for an overview).

	\vspace{0.5em}

	In this paper, we show that the arguments of \cite{gozzi2006weak} can be used to easily  provide a functional version \eqref{eq: intro ito}-\eqref{eq: intro replication} using Dupire's notion of derivatives
	  for functionals $F$ defined on the space of paths. Unlike \cite{bandini2017weak}, we voluntarily restrict ourselves to the case where $X$ has continuous paths for tractability, see Remark \ref{rem : si X est pas continu}. 	Since we will use the stochastic calculus by regularization developed by Russo and Vallois, and their co-authors, 
	we naturally provide a version for weak Dirichlet processes that extends \cite{gozzi2006weak} to the path-dependent case. 
	In general, it requires additional conditions involving both the path-regularity of 
	  the underlying process $X$ and of the path-dependent functional $F$, that are satisfied when $X$ is a continuous semimartingale and $F$ is smooth, 
	or under other typical structure conditions on $F$, in particular if $F$ is Fr\'echet differentiable in space.

	\vspace{0.5em}
	
	Our main motivation comes from mathematical finance.
	In models without frictions, the prices of financial assets turn out to be semimartingales and even martingales under a suitable probability measure.
	The $\mathbb{C}^{0,1}$--functional It\^o's formula allows one to understand the structure/relation between the martingale parts of different financial assets,
	which is  the core problem for the hedging of risks.
	More concretely, we provide a new result on the super-hedging of path-dependent options, under model uncertainty,
	where the gradient of the value function provides the optimal super-hedging strategy.
	Unlike in \cite{BT20a}, the situation we consider does not correspond to that of a concave functional, so that the results of \cite{BT19,BT20a} can not be exploited. 
	In particular, in this application, the regularity on the first order derivative is proved by using PDE techniques, and seems to be original. 

	\vspace{0.5em}

	The rest of the paper is organized as follows.
	We first provide our version of the path-dependent It\^o's formula for $\mathbb{C}^{0,1}$-functionals and continuous weak Dirichlet processes in Section \ref{sec:Ito}.
	Applications in finance are then provided in Section \ref{sec:Applications}.

\section{Path-dependent It\^{o}'s formula for $\Cb^{0,1}$-functionals}
\label{sec:Ito}

	In this section, we fix a completed probability space $(\Om, \Fc, \P)$, equipped with a filtration $\F = (\Fc_t)_{t \in [0,T]}$ satisfying the usual conditions.
	The abbreviation u.c.p. denotes the uniform convergence in probability.

\subsection{Preliminaries}

	We start with preliminaries on the stochastic calculus via regularization and the notion of Dupire's derivatives of path-dependent functions.

\subsubsection{It\^o's calculus via regularization and weak Dirichlet processes}

	Let us recall here some definitions and facts on the It\^o calculus via regularization developped by Russo and Vallois \cite{russo1993forward, russo1995generalized, russo2007elements}. 
	See also Bandini and Russo \cite{bandini2017weak} (and \cite{russo1996ito, gozzi2006weak}) for a version of the $C^1$--It\^o's formula.

	\begin{Definition} \label{def:integral}
		$\mathrm{(i)}$ Let $X$ be a real valued {c\`adl\`ag} process, and $H$ be a process with paths in $L^1([0,T])$ a.s. The forward integral of $H$ w.r.t. $X$ is defined by
		$$
			\int_0^t H_s ~d^-X_s 
			~:=~ 
			\lim_{\eps \searrow 0} \frac1\eps \int_0^t H_s \big( X_{(s+\eps) \wedge t} - X_s \big) ds,
			~~t \ge 0,
		$$
		whenever the limit exists in the sense of u.c.p.
		
		\vspace{0.5em}
		
		\noindent $\mathrm{(ii)}$ 
		Let $X$ and $Y$ be two real valued {c\`adl\`ag} processes. The co-quadractic variation $[X,Y]$ is defined by 
		$$
			[X,Y]_{t}
			~:=~ 
			\lim_{\eps\searrow 0} \frac1\eps \int_{0}^{t} (X_{(s+\eps)\wedge t}-X_{s})(Y_{(s+\eps)\wedge t}-Y_{s})ds,
			~~t\ge 0,
		$$ 
		whenever the limit exists in the sense  of u.c.p.
		
		\vspace{0.5em}
		
		\noindent $\mathrm{(iii)}$ We say that a real valued {c\`adl\`ag} process $X$ has finite quadratic variation,
		if its quadratic variation, defined by $[X]:=[X,X]$, exists and is finite a.s.
	\end{Definition}

	\begin{Remark}
		When $X$ is a {(c\`adl\`ag)} semimartingale and $H$ is a  c\`adl\`ag adapted process,   $\int_0^t H_s ~d^-X_s$ coincides with the usual It\^o's integral $\int_0^t H_{s} dX_s$.
	 	When $X$ and $Y$ are two semimartingales,   $[X, Y]$ coincides with the usual bracket.
 	\end{Remark}

	\begin{Definition} \label{def:weakDirichlet}
		$\mathrm{(i)}$ We say that an adapted process $A$ is orthogonal if $[A, N] = 0$ for any real valued continuous local martingale $N$.
		
		\vspace{0.5em}
		
		\noindent $\mathrm{(ii)}$ An adapted   process $X$ is called a   weak Dirichlet process if it has a decomposition of the form 
		 $X = X_0 + M +A$, where $M$ is a local martingale and $A$ is orthogonal such that $M_0 = A_0 = 0$.
	\end{Definition}
	
	\begin{Remark}
		$\mathrm{(i)}$ An adapted  process with finite variation is orthogonal.
		Consequently, a   semimartingale is in particular a continuous weak Dirichlet process.

		\vspace{0.5em}
		
		\noindent $\mathrm{(ii)}$ An orthogonal process has not necessarily finite variation. For example, any   deterministic process (with possibly infinite variation) is orthogonal.

		\vspace{0.5em}
		
		\noindent $\mathrm{(iii)}$	
		The decomposition  $X= X_0 + M+A$ for a continuous weak Dirichlet process $X$ is unique,
		 and both processes $M$ and $A$ in the decomposition are continuous.
 	\end{Remark}

\subsubsection{Dupire's derivatives of path-dependent functions}

	Let us denote by $C([0,T])$ the space of all $\R^d$--valued continuous paths on $[0,T]$,
	and by $\DT$ the space of all $\R^d$--valued c\`adl\`ag paths on $[0,T]$,
	which are endowed with the uniform convergence topology induced by the norm $\Ninfty{\xr} := \sup_{s \in [0,T]}|\xr_s|$.
	Let $\Theta := [0,T] \x \DT$.
	For $(t,\xr)\in \Theta$, let us define the (optional) stopped path $\xr_{t \wedge}:=(\xr_{t\wedge s})_{s\in [0,T]}$. 	

	\vspace{0.5em}

	A function $F: \Theta \longrightarrow \R$ is said to be non-anticipative if $F(t, \xr)=F(t, \xr_{t \wedge})$ for all $(t,\xr)\in \Theta$.
	A non-anticipative function $F: \Theta \longrightarrow \R$ is said to be continuous if, 
	for all $(t, \xr) \in \Theta$ and $\eps > 0$, there exists $\delta > 0$ such that
	$$
		|t - t'| + \| \xr_{t \wedge } - \xr'_{t' \wedge } \| \le \delta
		~\Longrightarrow~
		|F(t, \xr) - F(t', \xr') | \le \eps.
	$$
	Let $\Cb(\Theta)$ denote the class of all non-anticipative continuous functions.
	A non-anticipative function $F$ is said to be left-continuous if,
	for all $(t, \xr) \in \Theta$ and $\eps > 0$, there exists $\delta > 0$ such that
	$$
		t' \le t, ~ |t - t'| + \| \xr_{t \wedge } - \xr'_{t' \wedge } \| \le \delta
		~\Longrightarrow~
		|F(t, \xr) - F(t', \xr') | \le \eps.
	$$
	We denote by $\Cb_{l}(\Theta)$ the class of all non-anticipative left-continuous functions.
	
	\vspace{0.5em}

	Let $F: \Theta \longrightarrow \R$ be a non-anticipative function, 
	we follow Dupire \cite{dupireito} to define the Dupire's derivatives:
	$F$ is said to be horizontally differentiable if,
	for all $(t,\xr)\in [0,T) \x \Theta$, its horizontal derivative
	$$
		\partial_{t}F(t,\xr) ~:=~ \lim_{h\searrow 0} \frac{F(t+h,\xr_{t \wedge }) - F(t,\xr_{t \wedge })}{h}
	$$
	is well-defined ;
	$F$ is said to be vertically differentiable if, for all $(t,\xr)\in \Theta$, the function
	$$
		y \longmapsto F(t,\xr\oplus_{t} y) 
		~\mbox{is differentiable at}~0, ~\mbox{with}~
		\xr\oplus_{t} y  
		~:=~
		\xr \1_{  [0,t)}  + (\xr_t + y) \1_{  [t,T]},
	$$
	whose derivative at $y=0$ is called the vertical derivative of $F$ at $(t, \xr)$, denoted by $\nabla_{\xr}F(t,\xr)$.
	One can then similarly define the second-order derivative $\nabla^{2}_{\xr}F$.
	 {Given}
	$$
		\Cb^{0,1}(\Theta) 
		:= 
		\big\{ 
			F \in \Cb(\Theta) ~:  \nabla_{\xr}F ~\mbox{is well defined and}~
			\nabla_{\xr}F \in \Cb_l(\Theta) 
		\big\},
	$$
	 {we let} $\Cb^{1,2}(\Theta)$ denote the class of all functions $F \in \Cb^{0,1}(\Theta)$ such that 
	both $\partial_t F$ and $\nabla^{2}_{\xr}F$ are well defined and  belong to $\Cb_l(\Theta)$.

 	\vspace{0.5em}

	A functional $F: \Theta \longrightarrow \R$ is said to be locally bounded if, for all $K > 0$,
	\begin{equation} \label{eq:locally_bounded}
		\sup_{t \in [0,T], ~\|\xr\| \le K} ~ \big|F(t, \xr) \big|   ~<~ \infty.
	\end{equation}
	Further, $F$ is said to be locally uniformly continuous if,   
	for each $K > 0$, there exists a modulus of continuity\footnote{ {A non-negative function that is  continuous at $0$ and vanishes at $0$}.} $\delta_K$ such that, 
	for all $t \in[0,T]$, $h \in [0,T-t]$, $\|\xr\| \le K$, $|y | \le K $,
	\begin{equation} \label{eq:unif_continuity_DF}
		\big| F(t, \xr) - F(t+h, \xr_{t \wedge }) \big| + \big| F(t, \xr) - F(t, \xr \oplus_{t} y) \big|
		~\le~
		\delta_K(h + |y|).
	\end{equation}
	Let us denote by $\Cb^{u,b}_{\rm loc} (\Theta)$ the class of  all locally bounded and locally uniformly continuous functions $F: \Theta \longrightarrow \R$. 
	Notice that a continuous function defined on $[0,T] \x \R^d$ is automatically locally bounded and locally uniformly continuous, 
	while it may not be true for a continuous function defined on $\Theta$.
	This is the reason for introducing the class $\Cb^{u,b}_{\rm loc}(\Theta)$.

	\vspace{0.5em}

	In the following, given a non-anticipative function $F: \Theta \longrightarrow \R$, we shall often write $F_t(\xr)$ in place of $F(t,\xr)$ for ease of notations.

\subsection{Functional It\^{o}'s formula for $\Cb^{0,1}(\Theta)$-functions}

 	We first provide a functional It\^o's formula for continuous weak Dirichlet processes. 
	More precisely, let $F \in \Cb^{0,1}$ and $X = M+A$ be a continuous weak Dirichlet process, 
	we give {a} necessary and sufficient condition for the following decomposition:
	\begin{equation} \label{eq:C01Ito}
		F(t,X) 
		~=~
		F(0,X)
		~+~ \int_{0}^{t} \nabla_{\xr} F_s(X) dM_{s}   
		~+~ \Gamma^F_t,
		~~~t \in [0,T],
	\end{equation}
	where $\Gamma^F$ is a continuous orthogonal process.

	\begin{Theorem}\label{thm: Ito} 
		Let $X = X_0 + M+A$ be a continuous weak Dirichlet process with finite quadratic variation,
		where $M$ is a (continuous) local martingale and $A$ is an orthogonal process.
		Let $F \in \Cb^{0,1}(\Theta)$ be such that both $F $ and $\nabla_{\xr} F$ belong to $\Cb^{u,b}_{\rm loc}(\Theta)$,  {and assume that $s\mapsto\nabla_{\xr} F_{s}(X)$ admits right-limits a.s.}		
		Then, $F(\cdot, X)$ is a continuous weak Dirichlet process with decomposition \eqref{eq:C01Ito} if and only if, for all continuous martingale $N$,
		\begin{equation} \label{eq:CNS_C01}
			\frac{1}{\eps} \int_{0}^{\cdot} \!\!\! \big(F_{s+\eps}(X) - F_{s+\eps}(X_{s \wedge }\oplus_{s+\eps}(X_{s+\eps}-X_{s})) \big) \big( N_{s+\eps}-N_{s} \big) ds
			\longrightarrow 0,
			~\mbox{u.c.p.,~as}~
			\eps \longrightarrow 0.
		\end{equation}
	\end{Theorem}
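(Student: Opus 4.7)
The plan is to first observe that~\eqref{eq:C01Ito} is equivalent to orthogonality of $\Gamma^F := F(\cdot,X) - F(0,X) - \int_0^\cdot \nabla_{\xr} F_s(X)\,dM_s$. The stochastic integral is well defined as a continuous local martingale, since $\nabla_{\xr} F$ is locally bounded (by $\Cb^{u,b}_{\rm loc}$-regularity) and $s\mapsto \nabla_{\xr} F_s(X)$ is predictable as a left-continuous adapted process. By bilinearity of the co-quadratic variation and the orthogonality of $A$, orthogonality of $\Gamma^F$ amounts to $[F(\cdot,X),N]_t = \int_0^t \nabla_{\xr} F_s(X)\,d[M,N]_s$ for every continuous local martingale $N$, so the task reduces to computing $[F(\cdot,X),N]$ by regularization.

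For this, I would decompose the natural increment as $F_{(s+\eps)\wedge t}(X) - F_s(X) = A_s^\eps + B_s^\eps + C_s^\eps$, where $A_s^\eps$ is the bracket appearing in~\eqref{eq:CNS_C01},
$$
B_s^\eps := F_{(s+\eps)\wedge t}\bigl(X_{s\wedge}\oplus_{(s+\eps)\wedge t}(X_{(s+\eps)\wedge t}-X_s)\bigr) - F_{(s+\eps)\wedge t}(X_{s\wedge})
$$
is a pure vertical perturbation, and $C_s^\eps := F_{(s+\eps)\wedge t}(X_{s\wedge}) - F_s(X_{s\wedge})$ is a pure horizontal shift along the stopped path $X_{s\wedge}$. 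The fundamental theorem of calculus rewrites $B_s^\eps$ as the integral of $\nabla_{\xr} F$ evaluated along a straight-line vertical perturbation, times $(X_{(s+\eps)\wedge t}-X_s)$; the uniform continuity modulus~\eqref{eq:unif_continuity_DF} of $\nabla_{\xr} F$, the a.s.\ continuity of $X$, and localization via $\tau_K:=\inf\{s:|X_s|\ge K\}$ make this integrand converge uniformly in $s$ to $\nabla_{\xr} F_s(X)$. A standard Riemann-sum argument for the quadratic covariation (invoking the right-limits assumption on $s\mapsto\nabla_{\xr} F_s(X)$ to pass to the limit from the right) then yields $\eps^{-1}\int_0^\cdot B_s^\eps (N_{(s+\eps)\wedge \cdot}-N_s)\,ds \to \int_0^\cdot \nabla_{\xr} F_s(X)\,d[M,N]_s$ u.c.p.

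The crucial remaining step is to show that the $C$-contribution vanishes u.c.p., \emph{regardless} of any further assumption on $F$. The key observation is that $C_s^\eps$ is $\Fc_s$-measurable (as it depends only on $X_{s\wedge}$) and $|C_s^\eps|\le \delta_K(\eps)\to 0$ on $\{\tau_K>T\}$ by~\eqref{eq:unif_continuity_DF}. Stochastic Fubini rewrites
$$
\frac{1}{\eps}\int_0^t C_s^\eps(N_{s+\eps}-N_s)\,ds = \frac{1}{\eps}\int_0^{t+\eps}\Bigl(\int_{(u-\eps)^+}^{u\wedge t}C_s^\eps\,ds\Bigr)\,dN_u,
$$
the inner time-integral being bounded by $\eps\,\delta_K(\eps)$. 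Splitting the outer integral at $u=t$: the part over $[0,t]$ is a genuine continuous martingale in $t$ whose quadratic variation is dominated by $\delta_K(\eps)^2[N]_t$, so Doob's inequality yields $L^2$-uniform convergence to zero; the part over $(t,t+\eps]$ is handled by a pathwise integration by parts (its kernel is absolutely continuous in $u$, so $[\tilde K^t,N]=0$), producing a bound of order $\delta_K(\eps)\sup_{u\le T+\eps}|N_u|$ uniformly in $t$, which also tends to zero in probability after standard localization of $N$. Combining the three pieces, $[F(\cdot,X),N]$ equals $\int_0^\cdot \nabla_{\xr} F\,d[M,N]$ plus the u.c.p.\ limit of the $A$-contribution, and both implications of the equivalence follow. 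The main difficulty I anticipate is precisely this uniform-in-$t$ control of the $C$-contribution, whose Fubini rewrite naturally couples the integrand to the upper limit $t$; the splitting into a bona fide martingale plus a remainder treated by pathwise integration by parts is what unlocks it.
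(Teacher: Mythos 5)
Your proposal is correct and follows essentially the same route as the paper: reduce to computing $[F(\cdot,X),N]$ by regularization, split the increment of $F$ into the condition-\eqref{eq:CNS_C01} term, a vertical perturbation treated by the fundamental theorem of calculus plus the uniform-continuity modulus, and a horizontal shift handled by Fubini and a dominated-convergence argument for stochastic integrals. Your explicit treatment of the boundary piece on $(t,t+\eps]$ in the Fubini rewrite is slightly more careful than the paper's, which simply writes the kernel $\theta^{11,\eps}$ and invokes \cite[Theorem I.4.31]{jacod2013limit}, but the substance is the same.
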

	
	The proof of Theorem \ref{thm: Ito} is postponed to the end of this section. 
	{N}otice that, when $F(t,\xr)=F_{\circ}(t,\xr_{t})$ for some $F_{\circ}: [0,T] \x \R^d \longrightarrow \R$,
	it is clear that $F_{s+\eps}(X) = F_{s+\eps}(X_{s \wedge }\oplus_{s+\eps}(X_{s+\eps}-X_{s}))$ so that \eqref{eq:CNS_C01} holds always true.
	Let us also provide a sufficient condition to ensure \eqref{eq:CNS_C01}.

	\begin{Proposition}\label{prop: cn pour cns}
		Assume that
		\begin{align}\label{eq: cond J2}
			E^{\eps}
			:=
			\int_{0}^{T}
			\frac{1}{\eps} \Big(F_{s+\eps}(X)-F_{s+\eps} \big(X_{s \wedge }\oplus_{s+\eps}(X_{s+\eps}-X_s) \big) \Big)^2
			ds
			\longrightarrow 0,\; \mbox{in probability}
		\end{align}
		 { as $\eps \longrightarrow 0$.} 
		Then, condition \eqref{eq:CNS_C01} holds true.
	\end{Proposition}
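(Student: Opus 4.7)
\textbf{Proof plan for Proposition \ref{prop: cn pour cns}.} The plan is to deduce \eqref{eq:CNS_C01} from \eqref{eq: cond J2} by a single application of the Cauchy--Schwarz inequality. Introduce the shorthand
$$
g^{\eps}_{s} \;:=\; F_{s+\eps}(X) - F_{s+\eps}\big(X_{s \wedge}\oplus_{s+\eps}(X_{s+\eps}-X_{s})\big),
$$
so that the hypothesis reads $E^{\eps} = \int_{0}^{T} \eps^{-1}(g^{\eps}_{s})^{2}\,ds \to 0$ in probability, while \eqref{eq:CNS_C01} requires $I^{\eps}_{t} := \int_{0}^{t}\eps^{-1}g^{\eps}_{s}(N_{s+\eps}-N_{s})\,ds$ to tend to $0$ u.c.p. as $\eps \searrow 0$, for every continuous martingale $N$.

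Fix such an $N$. By Cauchy--Schwarz on $[0,t] \subset [0,T]$,
$$
|I^{\eps}_{t}| \;\le\; \Big(\int_{0}^{T}\frac{(g^{\eps}_{s})^{2}}{\eps}\,ds\Big)^{1/2}\Big(\int_{0}^{T}\frac{(N_{s+\eps}-N_{s})^{2}}{\eps}\,ds\Big)^{1/2} \;=\; (E^{\eps})^{1/2}\,(Q^{\eps})^{1/2},
$$
where the right-hand side is independent of $t$. Thus $\sup_{t \in [0,T]}|I^{\eps}_{t}| \le (E^{\eps})^{1/2}(Q^{\eps})^{1/2}$, and it is enough to prove that this product tends to $0$ in probability.

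The first factor vanishes in probability by assumption \eqref{eq: cond J2}. For the second, I invoke the classical result of the stochastic calculus via regularization: for any continuous local martingale $N$ extended by $N_{s} := N_{T}$ for $s>T$, one has
$$
\frac{1}{\eps}\int_{0}^{\cdot}(N_{(s+\eps)\wedge \cdot}-N_{s})^{2}\,ds \;\longrightarrow\; [N]_{\cdot} \quad \text{u.c.p.,~as }\eps \searrow 0.
$$
In particular $Q^{\eps}$ converges in probability to $[N]_{T}$, and so is bounded in probability. Combining a sequence tending to $0$ in probability with a sequence bounded in probability gives $(E^{\eps})^{1/2}(Q^{\eps})^{1/2} \to 0$ in probability, which yields the desired u.c.p. convergence in view of the pointwise bound above.

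The only mildly delicate point, which I do not expect to cause any real trouble, is handling the right endpoint: the integral defining $Q^{\eps}$ uses $(N_{s+\eps}-N_{s})^{2}$ with the above extension of $N$, whereas the regularization theorem is usually stated with $(N_{(s+\eps)\wedge T}-N_{s})^{2}$. The discrepancy lives on $[T-\eps,T]$ and its $L^{1}$-norm is bounded by $\E\big([N]_{T}-[N]_{T-\eps}\big)$ (upon a localization argument if $N$ is merely local martingale), which tends to $0$ by continuity of $[N]$. Apart from this routine verification, the proof is a direct Cauchy--Schwarz.
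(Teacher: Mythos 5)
Your proof is correct and follows the same route as the paper's: a single application of Cauchy--Schwarz, with the first factor controlled by hypothesis \eqref{eq: cond J2} and the second by the finite quadratic variation of the continuous martingale $N$. Your additional care about boundedness in probability of $Q^{\eps}$ and the right-endpoint convention is sound but not substantively different from what the paper does in one line.
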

	\proof Using Cauchy-Schwarz inequality, it follows that, for all continuous martingale $N$,
	\begin{eqnarray*}
		&&
		\Big| \int_{0}^{\cdot} \frac{\left(F_{s+\eps}(X)-F_{s+\eps}(X_{s \wedge }\oplus_{s+\eps}(X_{s+\eps}-X_{s})) \right)}{\sqrt{\eps}}
		~\frac{N_{s+\eps}-N_{s}}{\sqrt{\eps}} ds \Big| \\
		&\!\!\!\le\!\!\!&
		\Big( \! \int_{0}^{\cdot}\! \frac{ \big(F_{s+\eps}(X)-F_{s+\eps}(X_{s \wedge }\oplus_{s+\eps}(X_{s+\eps}-X_{s})) \big)^2}{\eps} ds \Big)^{1/2}
		\! \Big( \! \int_0^{\cdot}\! \frac{(N_{s+\eps}-N_{s})^2}{\eps} ds \Big)^{1/2},
	\end{eqnarray*}
	which converges to $0$ in the sense of u.c.p. by \eqref{eq: cond J2}, together with the fact that $N$ has finite quadratic variation.
	\qed

	\begin{Remark}\label{rem: thm: Ito}
		The sufficient condition \eqref{eq: cond J2} is still quite abstract,
		we will provide more discussions on it in Section \ref{subsec:DiscussionsConditions}.
		In particular it is satisfied when $X$ is a continuous semimartingale and $F\in \Cb^{1,2}(\Theta)$, 
		so that the result in Theorem \ref{thm: Ito} is consistent with that in \cite{cont2013functional}. 
		Let us also notice that, to prove \eqref{eq:C01Ito}, it is indeed enough to check that for any sequence $(\eps_n)_{n \ge 1}$, such that $\eps_n \longrightarrow 0$,
		there exists a subsequence $(\eps_{n_k})_{k \ge 1}$ along which the convergence in \eqref{eq:CNS_C01} holds true.		
	\end{Remark}

	We next provide a direct consequence of Theorems \ref{thm: Ito}, by  {combining} it  {with} the Doob-Meyer decomposition, in the case where $X$ is a continuous martingale and $F(\cdot, X)$ is a supermartingale.
	Notice that, in the following context, our result is more precise than the classical Doob-Meyer decomposition for supermartingales.

	\begin{Corollary} \label{coro:DoobMeyer}
		Let $F: \Theta \longrightarrow \R$ satisfy the conditions in Theorem \ref{thm: Ito}.
		Assume in addition that $X$ is a continuous local martingale, and $F(\cdot, X)$ is a supermartingale.
		Then
		$$
			F(t,X) ~=~
			F(0, X) ~+ \int_0^t \nabla_{\xr} F_s (X) dX_s
			~+~ A_t,
			~~\mbox{for all}~t \in [0,T],
		$$
		where $A$ is a predictable non-increasing process. 
	\end{Corollary}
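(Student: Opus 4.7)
The plan is to combine the weak Dirichlet decomposition supplied by Theorem \ref{thm: Ito} with the classical Doob--Meyer decomposition of the supermartingale $F(\cdot, X)$, and then to invoke uniqueness of the weak Dirichlet decomposition (Remark 2.3(iii)) in order to identify the orthogonal piece $\Gamma^F$ with minus the predictable compensator.

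First, I would apply Theorem \ref{thm: Ito} to $X$ viewed as a continuous weak Dirichlet process with trivial decomposition $X = X_{0} + (X-X_{0}) + 0$, i.e.\ with martingale part $M = X - X_{0}$ and vanishing orthogonal part. Together with the regularity hypotheses placed on $F$, this yields
$$
F(t,X) \;=\; F(0,X) \,+\, \int_{0}^{t} \nabla_{\xr} F_{s}(X)\, dX_{s} \,+\, \Gamma^{F}_{t},
$$
where $\Gamma^{F}$ is a continuous orthogonal process starting from $0$. Second, since $F \in \Cb(\Theta)$ and $X$ has continuous paths, $s \mapsto F_{s}(X) = F(s, X_{s\wedge})$ is continuous; being also a supermartingale by assumption, the continuous Doob--Meyer decomposition gives
$$
F(t,X) \;=\; F(0,X) \,+\, L_{t} \,-\, B_{t},
$$
with $L$ a continuous local martingale vanishing at $0$ and $B$ a continuous, predictable, non-decreasing process with $B_{0}=0$.

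Third, the process $-B$ has finite variation and is therefore orthogonal (Remark 2.3(i)), so the Doob--Meyer decomposition above is itself a weak Dirichlet decomposition of the continuous process $F(\cdot,X)$. The uniqueness statement in Remark 2.3(iii) then forces
$$
L_{t} \;=\; \int_{0}^{t} \nabla_{\xr} F_{s}(X)\, dX_{s}, \qquad \Gamma^{F}_{t} \;=\; -B_{t},
$$
and setting $A_{t} := \Gamma^{F}_{t} = -B_{t}$ gives the claimed decomposition with $A$ predictable and non-increasing.

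There is no serious obstacle in this plan: the content is essentially the pairing of Theorem \ref{thm: Ito} with Doob--Meyer via the uniqueness of the weak Dirichlet decomposition. The only point to be careful about is that the phrase ``conditions in Theorem \ref{thm: Ito}'' must be read as including the necessary-and-sufficient condition \eqref{eq:CNS_C01}, since it is precisely this condition which guarantees the identification of the martingale part of $F(\cdot,X)$ as $\int_{0}^{\cdot} \nabla_{\xr} F_{s}(X)\, dX_{s}$; without it, the uniqueness argument in the final step would have nothing to match against.
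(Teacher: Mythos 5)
Your proof is correct and follows essentially the same approach as the paper's: combine Theorem \ref{thm: Ito} with the continuous Doob--Meyer decomposition of the supermartingale $F(\cdot,X)$, then identify the two decompositions. The only minor stylistic difference is that you finalize the identification by invoking uniqueness of the weak Dirichlet decomposition directly, whereas the paper first argues that $\Gamma^F$ is a finite-variation supermartingale; both rest on the same underlying fact that a continuous local martingale orthogonal to all continuous local martingales must vanish.
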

	\proof
		 
		It follows  {from} Theorem \ref{thm: Ito} 
		that the continuous supermartingale $F(\cdot, X)$ has the   decomposition
		\begin{equation} \label{eq:DecompFX}
			F(t,X) ~=~
			F(0, X) + \int_0^t \nabla_{\xr} F_s (X) dX_s
			+\Gamma^F_t,
		\end{equation}
		where $\Gamma^F$ is a continuous (predictable) orthogonal process.
		At the same time, $\Gamma^F$ should be a supermartingale, since $F(\cdot, X)$ is a supermartingale and $ \int_0^{\cdot} \nabla_{\xr} F_s (X) dX_s$ is a local martingale.
		Then, $\Gamma^F$ has finite variation, and hence
		\eqref{eq:DecompFX} coincides with the Doob-Meyer decomposition of $F(\cdot, X)$. As a conclusion, 
		 $\Gamma^F =  A$ for some predictable non-increasing process $A$.
	\qed
 
	\vspace{0.5em}

\noindent {\bf Proof of Theorem \ref{thm: Ito}.}
	Notice that $F \in \Cb(\Theta)$, $\nabla_{\xr}F \in \Cb_l(\Theta)$ and $X$ is a continuous process.
	Then the process $t \mapsto F_t(X)$ has a.s.~continuous paths,
	$t \mapsto \nabla_{\xr}F_t(X)$ has a.s.~left-continuous paths (see e.g. \cite[Lemma 2.6]{cont2013functional}).
	We now follow the arguments of \cite[Theorem 5.15]{bandini2017weak} to 
	show that \eqref{eq:CNS_C01} is a necessary and sufficient condition for the decomposition   \eqref{eq:C01Ito}.

	\vspace{0.5em}	
	
	\noindent $\mathrm{(i)}$ 
	Let us define the process $\Gamma^F$ by
	$$
		\Gamma^F_{\cdot} ~:=~ F_{\cdot}(X) - \int_{0}^{\cdot} \nabla_{\xr} F_{s}(X) dM_{s}.
 	$$
	We need to show that  {the} condition \eqref{eq:CNS_C01} is   necessary and sufficient   to ensure that 
	$\Gamma^F$ is an orthogonal process (Definition \ref{def:weakDirichlet}),
	that is, for any continuous local martingales $N$,
	$$
		\big[ \Gamma^F, N \big] 
		~=~
		\Big[ F_{\cdot}(X) - \int_{0}^{\cdot} \nabla_{\xr} F_{s}(X) dM_{s}  
		~,~
		N 
		\Big]
		~=~
		0.
	$$
	We first notice that, by \cite[Proposition 2.8]{bandini2017weak},
	$$
		\Big[ \int_{0}^{\cdot} \nabla_{\xr} F_{s}(X) dM_{s}, N \Big] 
		~=~
		\int_0^{\cdot} \nabla_{\xr} F_s(X) d [M, N]_s
		~=~
		\int_0^{\cdot} \nabla_{\xr} F_s(X) d [M, N]_s.
	$$
	Then, to prove the decomposition \eqref{eq:C01Ito}, it is equivalent to show that, for any continuous local martingale $N$,
	\begin{equation}\label{eq: prove equality convariations}
		I^{\eps}_{\cdot} := \frac{1}{\eps} \int_{0}^{\cdot} \!\! \big(F_{s+\eps}(X)-F_{s}(X)\big) \big( N_{s+\eps}-N_{s} \big) ds 
		\longrightarrow
		\int_{0}^{\cdot}\!\! \nabla_{\xr}F_{s}(X)d[M,N]_{s},
		~\mbox{as}~ 
		\eps\searrow 0, ~\mbox{u.c.p.},
	\end{equation}
	Let us write $I^{\eps} = I^{1, \eps} + I^{2, \eps}$, with
	$$
		I^{1,\eps}_{t} ~:=~ \frac{1}{\eps} \int_{0}^{t} \big(F_{s+\eps}(X_{s \wedge  }\oplus_{s+\eps}(X_{s+\eps}-X_{s}))- F_{s}(X) \big) \big( N_{s+\eps}-N_{s} \big)  ds,
	$$
	and
	$$
		I^{2,\eps}_{t}~:=~ \frac{1}{\eps} \int_{0}^{t} \big(F_{s+\eps}(X) - F_{s+\eps}(X_{s \wedge }\oplus_{s+\eps}(X_{s+\eps}-X_{s})) \big) \big( N_{s+\eps}-N_{s} \big) ds.
	$$

	\noindent $\mathrm{(ii)}$ 
	Let us first consider $I^{1,\eps}$ and write it as $I^{1,\eps} = I^{11,\eps}+I^{12,\eps} + I^{13,\eps} + I^{14,\eps}$, where
	$$
		I^{11,\eps}_{t} := \frac{1}{\eps} \int_{0}^{t}  \big( F_{s+\eps}(X_{s \wedge }) - F_{s}(X) \big)\! \big( N_{s+\eps} - N_{s} \big) ds,
	$$
	$$
		I^{12,\eps}_{t} := \frac{1}{\eps} \int_{0}^{t} \Delta^{\eps}_{s} \cdot \big(X_{s+\eps} - X_{s} \big) \big(N_{s+\eps} - N_{s} \big) ds,
	$$
	with
	$$
		\Delta^{\eps}_{s}
		:=
		\int_{0}^{1} 
			\Big(
				\nabla_{\xr} F_{s+\eps} \big( X_{s \wedge }\oplus_{s+\eps} \lambda( X_{s+\eps}-X_{s}) \big)
				-
				\nabla_{\xr}F_{s+\eps} (X_{s \wedge } ) 
			\Big)
		d\lambda.
	$$
	and
	$$
		I^{13,\eps}_{t} 
		:=
		\frac{1}{\eps} \int_{0}^{t} \big( \nabla_{\xr} F_{s+\eps} (X_{s \wedge } ) -  \nabla_{\xr} F_{s} (X_{s \wedge } ) \big) \cdot \big(X_{s+\eps}-X_{s} \big) \big(N_{s+\eps}-N_{s} \big) ds,
	$$
	$$
		I^{14,\eps}_{t} := \frac{1}{\eps} \int_{0}^{t} \nabla_{\xr} F_{s} (X_{s \wedge }) \cdot \big(X_{s+\eps}-X_{s} \big) \big(N_{s+\eps}-N_{s} \big) ds.
	$$

	For the term $I^{11,\eps}$, one has, by the integration by parts formula,
	$$
		I^{11,\eps}_{t}
		~=~
		\frac{1}{\eps} \int_{0}^{t} \left( \big(F_{s+\eps}(X_{s \wedge })-F_{s}(X) \big)  \int_{s}^{s+\eps}dN_{u} \right) ds
		~=~
		\int_{0}^{t+\eps} \theta^{11,\eps}_u dN_{u},
	$$
	where, by the uniform continuity condition \eqref{eq:unif_continuity_DF} on $F$,
	$$
		\theta^{11, \eps}_u 
		~:=~
		\frac{1}{\eps}  \int_{ (u-\eps)\vee 0}^{u} 
			\big( F_{s+\eps}(X_{s \wedge })-F_{s}(X) \big)
		ds 
		~\longrightarrow~ 
		0, ~\mbox{for all}~u \in [0,T],
		~\mbox{a.s.}
	$$
	Then, by e.g.~\cite[Theorem I.4.31]{jacod2013limit},
	$$
		I^{11,\eps}
		\longrightarrow
		0,~\mbox{u.c.p.}
		~\mbox{as}~
		\eps \longrightarrow 0.
	$$

	For the terms $I^{12, \eps}$ and $I^{13, \eps}$, we notice that
	$$
		\sup_{t \in [0,T]} \big( \big| I^{12,\eps}_t  \big| + \big| I^{13,\eps}_t  \big| \big)
		~\le~
		\delta_{\eps} ~ \Big( \frac{1}{\eps} \int_0^T \big|X_{s+\eps}-X_{s} \big|^2 ds \Big) \Big( \frac{1}{\eps} \int_0^T \big(N_{s+\eps}-N_{s} \big)^2 ds \Big),
	$$
	where
	$$
		\delta_{\eps} 
		:= 
		\sup_{0 \le s \le T-\eps} 
		\Big( |\Delta^{\eps}_s| + \big| \nabla_{\xr} F_{s+\eps} (X_{s \wedge } ) -  \nabla_{\xr} F_{s} (X_{s \wedge } ) \big| \Big)
		\longrightarrow 0,
		~\mbox{a.s. as}~
		\eps \searrow 0,
	$$
	by the uniformly continuity condition \eqref{eq:unif_continuity_DF} on $\nabla_{\xr}F$.
	Since 
	$$
		\Big( \frac{1}{\eps} \int_0^T \big(X_{s+\eps}-X_{s} \big)^2 ds \Big) \Big( \frac{1}{\eps} \int_0^T \big(N_{s+\eps}-N_{s} \big)^2 ds \Big)
		~\longrightarrow~[X]_T [N]_T,~\mbox{u.c.p. as}~ 
		\eps \longrightarrow 0,
	$$
	it follows that $I^{13,\eps} \longrightarrow~ 0$ and $I^{13,\eps} \longrightarrow~ 0$, u.c.p.

	\vspace{0.5em}

	Finally, for $I^{14,\eps}$, we apply \cite[Corollary A.4 and Proposition A.6]{bandini2017weak} to obtain that
	$$
		I^{14, \eps}_{\cdot} \longrightarrow \int_0^{\cdot} \nabla_{\xr} F_s(X) d[M, N]_s,
		~\mbox{u.c.p., as}~
		\eps \longrightarrow 0,
	$$
	so that 
	$$
		I^{1, \eps}_{\cdot} \longrightarrow \int_0^{\cdot} \nabla_{\xr} F_s(X) d[M, N]_s,
		~\mbox{u.c.p., as}~
		\eps \longrightarrow 0.
	$$
	
	\noindent $\mathrm{(iii)}$ 	
	To conclude, we observe that \eqref{eq: prove equality convariations} holds true (or equivalently \eqref{eq:C01Ito} holds) if and only if $I^{2, \eps} \longrightarrow 0$, u.c.p.~(or equivalently \eqref{eq:CNS_C01} holds),
	for any continuous local martingale $N$.
	This concludes the proof.
\endproof

	\begin{Remark}\label{rem : si X est pas continu} 
  	The results and proof of Theorem \ref{thm: Ito}
	remain valid even if $X$ is a c\`adl\`ag weak Dirichlet process with bounded quadratic variation, up to the fact that $\nabla_{\xr} F_s (X)$ must be replaced by $\nabla_{\xr} F_s (X\1_{[0,s)}+X_{s-}\1_{[s,T]})$ in \eqref{eq:C01Ito}.
	However, in this case, 
	the decomposition of the weak Dirichlet process $F_{\cdot}(X)$ may not be unique.
	To see this, recall that any purely discontinuous martingale is orthogonal to a continuous martingale $N$,
	then one can always move a purely discontinuous martingale from the martingale part of $F_{\cdot}(X)$ to the orthogonal part of $F_{\cdot}(X)$ and the decomposition of the weak Dirichlet process $F_{\cdot}(X)$ stays valid.
	
	\vspace{0.5em}
	
	To ensure the uniqueness of the decomposition of $F_{\cdot}(X)$, 
	one needs to use the notion of special weak Dirichlet process in \cite{bandini2017weak},
	where the orthogonal part $\Gamma^{F}$ entering \eqref{eq:C01Ito} is required to be predictable.
	Then it is possible to mimic the smoothing procedure of \cite{bandini2017weak} to obtain such a decomposition for $F_{\cdot}(X)$.
	However, smoothing a $\Cb^{0,1}(\Theta)$-function into a $\Cb^{1,2}(\Theta)$-function 
	requires various and heavy technical assumptions, see \cite{saporito2018functional}, 
	which may be difficult to check in the applications we have in mind. 	
	\end{Remark}

\subsection{Discussions on the condition \eqref{eq: cond J2} }
\label{subsec:DiscussionsConditions}

	The sufficient technical condition \eqref{eq: cond J2} used  to ensure the decomposition result in Theorem  \ref{thm: Ito} is still too abstract.
	Let us provide some {more}  explicit sufficient conditions for  \eqref{eq: cond J2}.
	We first show that  \eqref{eq: cond J2} holds true when $X$ is a continuous semimartingale and  $F \in \mathbb{C}^{1,2}(\Theta)$, which makes our result consistent with \cite{cont2013functional}.
	We will then provide some examples of sufficient conditions for \eqref{eq: cond J2} when $F$ is not in $\mathbb{C}^{1,2}(\Theta)$.
	Also recall that \eqref{eq: cond J2}    trivially holds  when $F$ is Markovian, i.e. $F(t,\xr)=F_{\circ}(t,\xr_{t})$  for all $(t,\xr) \in [0,T]\x \DT$, for some $F_{\circ}: [0,T] \x \R^d \longrightarrow \R$.

\subsubsection{The case where $F \in \mathbb{C}^{1,2}(\Theta)$ and $X$ is a continuous semimartingale}
\label{sec: cas C12}

	When $X$ is a continuous semimartingale and $F \in \mathbb{C}^{1,2}(\Theta)$ with (local) bounded and uniformly continuous derivatives, 
	one can check that \eqref{eq: cond J2}   holds true by simply applying the functional It\^o's formula of  \cite{cont2013functional}.

	\begin{Proposition}\label{prop: cas C2}
		Let $X$ be a continuous semimartingale and  $F  \in \mathbb{C}^{1,2}(\Theta)$ be such that 
		 $\partial_t F$ and $ \nabla^2_{\xr} F$ are locally bounded, and $\nabla_{\xr}F  \in \Cb^{u,b}_{\rm loc}$.
		Then, condition \eqref{eq: cond J2}  holds true.
	\end{Proposition}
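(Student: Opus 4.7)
The plan is to obtain complementary expansions of $F_{s+\eps}(X)$ and $F_{s+\eps}(X_{s\wedge}\oplus_{s+\eps}(X_{s+\eps}-X_s))$ around the common value $F_s(X)=F_s(X_{s\wedge})$, and then compare them term by term. Since $F\in\Cb^{1,2}(\Theta)$ and $X$ is a continuous semimartingale, the functional It\^o formula of \cite{cont2013functional} applied on $[s,s+\eps]$ yields
\[
F_{s+\eps}(X)-F_s(X)=\int_s^{s+\eps}\!\partial_t F_u(X)\,du+\int_s^{s+\eps}\!\nabla_\xr F_u(X)\,dX_u+\tfrac12\int_s^{s+\eps}\!\nabla^2_\xr F_u(X)\,d[X]_u.
\]
For the stopped-and-shifted path, the horizontal fundamental theorem of calculus gives $F_{s+\eps}(X_{s\wedge})-F_s(X_{s\wedge})=\int_s^{s+\eps}\partial_t F_u(X_{s\wedge})\,du$, while the map $y\mapsto F_{s+\eps}(X_{s\wedge}\oplus_{s+\eps}y)$ is $C^2$ on $\R^d$ with vertical derivatives $\nabla_\xr F_{s+\eps}(X_{s\wedge}\oplus_{s+\eps}y)$ and $\nabla^2_\xr F_{s+\eps}(X_{s\wedge}\oplus_{s+\eps}y)$; applying classical It\^o to this function along the semimartingale $u\mapsto X_u-X_s$ produces an analogous expansion. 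Subtracting yields $F_{s+\eps}(X)-F_{s+\eps}(X_{s\wedge}\oplus_{s+\eps}(X_{s+\eps}-X_s))=T_1(s,\eps)+T_2(s,\eps)+T_3(s,\eps)$ with $T_1=\int_s^{s+\eps}[\partial_t F_u(X)-\partial_t F_u(X_{s\wedge})]\,du$, $T_2=\int_s^{s+\eps}[\nabla_\xr F_u(X)-\nabla_\xr F_{s+\eps}(X_{s\wedge}\oplus_{s+\eps}(X_u-X_s))]\,dX_u$, and $T_3$ the analogous $d[X]$-integral of a second-order difference.

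To control $E^\eps$ I would localize via $\tau_K:=\inf\{t:|X_t|\ge K\}\wedge T$, on $\{\tau_K=T\}$ bounding $|\partial_t F|,|\nabla_\xr F|,|\nabla^2_\xr F|$ uniformly by some $C_K<\infty$. Then $|T_1|\le 2C_K\eps$ gives $\int_0^T T_1^2/\eps\,ds\le 4C_K^2T\eps\to 0$; similarly $|T_3|\le C_K([X]_{s+\eps}-[X]_s)$, and since $\sup_s([X]_{s+\eps}-[X]_s)\to 0$ a.s.\ by continuity of $[X]$ while $\int_0^T([X]_{s+\eps}-[X]_s)/\eps\,ds\to [X]_T$, the contribution of $T_3$ to $E^\eps$ vanishes a.s. For $T_2$, after the semimartingale decomposition $X=M+A$, the $dA$-piece is dominated by $\sup_s(|A|_{s+\eps}-|A|_s)|A|_T$ via Cauchy--Schwarz (and vanishes a.s.\ by continuity of $|A|$), and the $dM$-piece is handled in $L^1(\P)$ using It\^o's isometry together with a Fubini swap, reducing it to
\[
E\int_0^T\sup_{s\in[(u-\eps)\vee 0,\,u]}\big|\nabla_\xr F_u(X)-\nabla_\xr F_{s+\eps}(X_{s\wedge}\oplus_{s+\eps}(X_u-X_s))\big|^2\,d[M^{\tau_K}]_u,
\]
which tends to $0$ by dominated convergence (uniform majorant $4C_K^2$, combined with $d[M^{\tau_K}]$-a.e.\ vanishing of the supremum). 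Letting $K\to\infty$ removes the localization.

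The main obstacle is justifying this last $d[M^{\tau_K}]$-a.e.\ vanishing of the supremum: the two paths appearing in the integrand, $X_{u\wedge}$ (continuous) and $X_{s\wedge}\oplus_{s+\eps}(X_u-X_s)$ (which exhibits a single jump at $s+\eps$), are \emph{not} related by a single horizontal extension or vertical perturbation in the sense of Definition of $\Cb^{u,b}_{\rm loc}$. To use the $\delta_K$-modulus of $\nabla_\xr F$ I would interpolate through a carefully chosen intermediate path such as $X_{s\wedge}\oplus_u(X_u-X_s)$ and bound each elementary step (horizontal extension of amplitude $\le\eps$, vertical perturbation of amplitude $\le\sup_{t\in[s,u]}|X_t-X_s|\le m_X^K(\eps)$, where $m_X^K$ is the a.s.\ modulus of continuity of the stopped $X$) by $\delta_K$. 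This chaining is precisely where the hypothesis $\nabla_\xr F\in\Cb^{u,b}_{\rm loc}$ is exploited, whereas the two outer terms $T_1,T_3$ use only local boundedness of $\partial_t F$ and $\nabla^2_\xr F$.
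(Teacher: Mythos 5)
Your decomposition is exactly the one the paper uses: subtracting the functional It\^o expansion of $F_{s+\eps}(X)-F_s(X)$ from the classical It\^o expansion of $y\mapsto F_{s+\eps}(X_{s\wedge}\oplus_{s+\eps}y)$ along $X_u-X_s$ produces the paper's terms $W^{1,\eps}_{s,r},W^{2,\eps}_{s,r},W^{3,\eps}_{s,r}$, and your treatment of $T_1,T_3$, the $dA$-piece of $T_2$ (Cauchy--Schwarz $+$ continuity of $|A|,[X]$), and the $dM$-piece of $T_2$ (localization, It\^o isometry, Fubini, dominated convergence) are all the same steps.

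The one place where your argument, as written, does not quite close is the chaining you flag as your ``main obstacle''. You propose the intermediate path $X_{s\wedge}\oplus_u(X_u-X_s)$ and claim the two resulting steps are each a horizontal extension or vertical perturbation controlled by $\delta_K$. Neither step actually has that form. Comparing $X_{u\wedge}$ with $X_{s\wedge}\oplus_u(X_u-X_s)$ at time $u$ changes the path on the whole interval $(s,u)$, which is not a vertical perturbation at $u$; and comparing $X_{s\wedge}\oplus_u(X_u-X_s)$ at $u$ with $X_{s\wedge}\oplus_{s+\eps}(X_u-X_s)$ at $s+\eps$ changes the path on $[u,s+\eps)$ (value $X_u$ versus $X_s$), which is not a pure horizontal extension of the first path. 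In fact, no finite sequence of horizontal extensions and vertical perturbations \emph{starting from} $X_{u\wedge}$ ever reaches $X_{s\wedge}\oplus_{s+\eps}(X_u-X_s)$, because such operations cannot alter a path's values on $[s,u)$. The clean chain, which the paper uses, is
\[
\nabla_\xr F_u(X_{u\wedge})\ \to\ \nabla_\xr F_s(X_{s\wedge})\ \to\ \nabla_\xr F_{s+\eps}(X_{s\wedge})\ \to\ \nabla_\xr F_{s+\eps}\big(X_{s\wedge}\oplus_{s+\eps}(X_u-X_s)\big),
\]
where the \emph{first} step is controlled by the pathwise left-continuity of $t\mapsto\nabla_\xr F_t(X)$ (which comes from $\nabla_\xr F\in\Cb_l(\Theta)$, \emph{not} from the uniform modulus $\delta_K$), and only the second and third steps are a horizontal extension of length $\eps$ and a vertical perturbation of size $|X_u-X_s|$, controlled by $\delta_K(\eps)+\delta_K(|X_u-X_s|)$. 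The left-continuity step is not uniform in $u$, but that does not matter: the pointwise (in $u$) vanishing of the supremum together with the bound $4C_K^2$ suffices for dominated convergence under $d[M^{\tau_K}]$, exactly as you set it up. So your argument is sound, provided you replace the proposed intermediate path by $X_{s\wedge}$ and distinguish explicitly between what $\Cb_l$ gives (non-uniform, path-by-path) and what $\Cb^{u,b}_{\rm loc}$ gives (uniform $\delta_K$ for horizontal/vertical steps only).
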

	\proof
	For simplification of the notations, let us consider the one-dimensional case.
	First, for every fixed $(s, \eps)$, we apply the functional It\^o's formula in \cite[Theorem 4.1]{cont2013functional}
	on $F(X)$ to obtain that
	\begin{eqnarray*}
		&&
		F_{s+\eps}(X) - F_{s+\eps}(X_{s \wedge }) 
		~=~
		\big( F_{s+\eps}(X) - F_s(X) \big)  + \big( F_s(X) -  F_{s+\eps}(X_{s \wedge }) \big) \\
		&=\!\!\!&
		\int_s^{s+\eps} \!\!\! \big( \partial_t F_r(X) - \partial_t F_r(X_{s \wedge }) \big) dr
		+
		\int_s^{s+\eps} \!\!\! \nabla_{\xr}F_{{r}}(X )  d X_r 
		+
		\frac12 \int_s^{s+\eps} \!\!\! \nabla^2_{\xr} F_{{r}}(X )  d [X]_r.
	\end{eqnarray*}
	Further, one can also apply the classical It\^o's formula {to} $\phi(X_r) := F_{s+\eps}(X_{s \wedge } \oplus_{s+\eps}(X_r - X_s))$ to obtain that
	\begin{eqnarray*}
		\!\!\!\!\! && \!\!\!\!\! F_{s+\eps} \big(X_{s \wedge }  \oplus_{s+\eps} (X_{s+\eps} - X_s) \big) - F_{s+\eps} \big(X_{s \wedge } \big)
		=
		\int_s^{s+\eps} \!\!\! \nabla_{\xr}F_{s+\eps}(X_{s\wedge } \oplus_{s+\eps} (X_r - X_s))  d X_r  \\
		&&~~~~~~~~~~~~~~~~~~~~~~~~~~~~~~~~~~~~~~~~+~ 
		\frac12 \int_s^{s+\eps} \!\! \nabla^2_{\xr} F_{s+\eps}(X_{s \wedge } \oplus_{s+\eps} (X_r - X_s))  d [X]_r.
	\end{eqnarray*}	
	Then, it follows that
	\begin{equation} \label{eq:F_diff_eps} 
		F_{s+\eps}(X) \!-\! F_{s+\eps} \big(X_{s \wedge }  \oplus_{s+\eps} (X_{s+\eps} - X_s) \big) 
		=
		\int_s^{s+\eps} \!\!\! W^{1,\eps}_{s,r} dr 
		+\!\! 
		\int_s^{s+\eps} \!\!\! W^{2,\eps}_{s,r} dX_r 
		+ \!
		\frac12 \! \int_s^{s+\eps} \!\!\! W^{3, \eps}_{s,r} d[X]_r,
	\end{equation}
	where
	$$
		W^{1,\eps}_{s,r} := \partial_t F_r(X) - \partial_t F_r(X_{s \wedge }),
		~~~
		W^{2,\eps}_{s,r} := \nabla_{\xr}F_r(X)
			\!-\! 
			\nabla_{\xr}F_{s+\eps} \big(X_{s\wedge } \oplus_{s+\eps} (X_r - X_s) \big),
	$$
	and
	$$
		W^{3,\eps}_{s,r} := \nabla^2_{\xr} F_r(X) -\nabla^2_{\xr} F_{s+\eps} (X_{s\wedge } \oplus_{s+\eps} (X_r - X_s)).
	$$
	By the local boundedness of $\partial_t F$, $\nabla_{\xr} F$ and $\nabla^2_{\xr}F$, it follows that
	$$
		\sup_{0 \le s \le r \le T, ~\eps > 0} 
		\Big( \big|W^{1,\eps}_{s,r}\big| + \big|W^{2,\eps}_{s,r}\big| + \big|W^{3,\eps}_{s,r}\big| \Big) 
		< 
		\infty,
		~\mbox{a.s.}
	$$
	Further, since $\nabla_{\xr}F$ satisfies  {the} (locally) uniform continuity condition \eqref{eq:unif_continuity_DF},
	for every fixed $r \in [0,T]$,
	one has
	$$
		\big|W^{2,\eps}_{s,r} \big|
		\le
		\big|
			\nabla_{\xr}F_r(X) - \nabla_{\xr}F_s(X)
		\big|
		+\delta_K(\eps {+ | X_r - X_s |}),
		~~\mbox{whenever}~\|X\| \le K. 
	$$
	As $s \mapsto \nabla_{\xr} F_s(X)$ is left-continuous,
	then for every fixed $r \in [0,T]$,
	\begin{equation} \label{eq:W2eps}
		\sup_{s \in [(r-\eps)\vee 0, r]} \big|W^{2,\eps}_{s,r} \big|  \longrightarrow 0,
		~\mbox{a.s.}~
		~\mbox{as}~\eps \longrightarrow 0.
	\end{equation}
	Let $X = X_0 + M +  A$   {where M is a} continuous martingale  {and $A$ is a} finite variat{ion} process,
	 {and} denote by $(|A|_t)_{t \in [0,T]}$ the total variation process of $A$.
	The two non-decreasing processes ${|A|}$ and $[M]$ are continuous, so that {they} are uniformly continuous on $[0,T]$, a.s.
	Recall the definition of $E^{\eps}$ in \eqref{eq: cond J2}. It follows {from} \eqref{eq:F_diff_eps} that
	$$
		E^{\eps} 
		~\le~
		4 E^{\eps}_1  + 4 E^{\eps}_2  + 4 E^{ \eps}_3 + 4 E^{ \eps}_4 ,
	$$
	where
	\begin{eqnarray*}
		E^{ \eps}_1
		&:=&
		\frac{1}{\eps} \int_0^T \Big( \int_s^{s+\eps}  W^{1,\eps}_{s,r} dr \Big)^2 ds 
		\le
		\int_0^T \!
			\int_s^{s+\eps} \! \big| W^{1,\eps}_{s,r} \big|^2  d r
		ds
		~\longrightarrow~
		0, ~\mbox{a.s.},
\\	
		E^{ \eps}_2 
		&:=&
		\frac{1}{\eps} \int_0^T \Big( \int_s^{s+\eps}  W^{2,\eps}_{s,r}  dA_r \Big)^2 ds 
		~\le~
		\int_0^T \!
			\big(|A|_{s+\eps} - |A|_s\big)
			\frac{1}{\eps}  \int_s^{s+\eps} \! \big| W^{2,\eps}_{s,r} \big|^2  d |A|_r
		ds \\
		&=&
		\int_0^T 
		\Big( \frac{1}{\eps}  \int_{(r-\eps)\vee 0}^{r} \big(|A|_{s+\eps} - |A|_s\big) \big| W^{2,\eps}_{s,r} \big|^2 ds \Big)
		d |A|_r
		~\longrightarrow~
		0, ~\mbox{a.s.},
\\
		E^{\eps}_3
		&:=&
		\frac{1}{4 \eps} \int_0^T \Big( \int_s^{s+\eps}  W^{3,\eps}_{s,r}  d[X]_r \Big)^2 ds  \\
		&\le&
		\int_0^T 
		\Big( \frac{1}{\eps}  \int_{(r-\eps)\vee 0}^{r} \big([X]_{s+\eps} - [X]_s\big) \big| W^{3,\eps}_{s,r} \big|^2 ds \Big)
		d [X]_r
		~\longrightarrow~
		0, ~\mbox{a.s.},
	\end{eqnarray*}	
	and
	$$
		E^{\eps}_4 := 
		\frac{1}{\eps} \int_0^T \Big( \int_s^{s+\eps}  W^{2,\eps}_{s,r}   dM_r \Big)^2  ds.
	$$
	To study the limit of $E^{\eps}_4$, one can assume w.l.o.g.~that $W^{2,\eps}$ and $[M]_T$ are uniformly bounded by {using} localization techniques.
	Then, by \eqref{eq:W2eps},
	$$
		\E \big[ \big| E^{\eps}_4 \big| \big] 
		=
		\frac{1}{\eps} \E \Big[ \int_0^T \int_s^{s+\eps} \big| W^{2,\eps}_{s,r}  \big|^2 d [M]_r ds \Big]
		=
		\E \Big[ \int_0^T  \Big( \frac{1}{\eps}  \int_{(r-\eps)\vee 0}^{r} \big| W^{2,\eps}_{s,r} \big|^2 ds \Big) d [M]_r \Big]
		\longrightarrow
		0.
	$$
	It follows that $E^{ \eps} \longrightarrow 0$ in probability, and {therefore that} \eqref{eq: cond J2} holds true.
	\endproof

\subsubsection{Examples of sufficient conditions {for} \eqref{eq: cond J2}}
\label{sec: cond structure pour cond de regu addi}

	We now provide   examples of    sufficient conditions  for  \eqref{eq: cond J2}. 
	The general idea behind  {them} is to exploit Item (ii) of Definition \ref{def:integral} to control the terms in \eqref{eq: cond J2}   by some quadratic variations, possibly up to an additional vanishing element.
	In the following, we let $\mathrm{BV}_{+}$ denote the collection of all  non-decreasing paths on $[0,T]$. 
 
 	\begin{Proposition}\label{prop: cond pour J2 sans carre} 
  		 Assume that, for all $\xr\in \DT$, $s\in [0,T]$ and $\eps \in [0,T-s]$, 
		$$
			\big|F_{s+\eps}(\xr)- F_{s+\eps}(\xr_{s \wedge }\oplus_{s +\eps} (\xr_{s+\eps}-\xr_s)) \big|
			~\le~ 
			\int_{(s, s+\eps)} \phi \big(\xr, |\xr_{u} - \xr_{s }|\big)db_u (\xr),
		$$
		where  $\phi: \CT \x \R_+  \longrightarrow \R$ satisfies $ \sup_{|y|\le K} \phi(\xr,y )  <\infty$, $\lim_{y \searrow 0}  \phi(\xr,y ) = \phi(\xr, 0) =0$ for all $\xr \in \CT$ and $K > 0$,  
		and $b: \CT \longrightarrow {\rm BV}_{+}$.
		Then, \eqref{eq: cond J2}   holds for any continuous process $X$.	
	\end{Proposition}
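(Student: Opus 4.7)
The strategy is a pathwise argument: apply Cauchy--Schwarz against the finite positive measure $db(X)$ to linearize the squared integrand of $E^\eps$, exchange the $ds$ and $db_u$ integrals by Fubini, and then invoke dominated convergence with respect to $db_u(X)$. Since every step runs on a fixed continuous sample path of $X$, this yields almost sure convergence, which is stronger than the proposition requires.

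Concretely, Cauchy--Schwarz with respect to $db_u(X)$ applied to the assumed pointwise bound gives
\begin{align*}
	&\Big(\int_{(s,s+\eps)} \phi(X,|X_u-X_s|)\,db_u(X)\Big)^{\!2} \\
	&\qquad\le\; \big(b_{s+\eps}(X)-b_s(X)\big)\int_{(s,s+\eps)} \phi(X,|X_u-X_s|)^2\,db_u(X).
\end{align*}
Inserting this bound into $E^\eps$ and swapping the $s$- and $u$-integrations on the region $\{(s,u):\,s\in[0,T],\,u\in(s,s+\eps)\}$, one obtains
\begin{align*}
	E^\eps \;\le\; \int_{[0,T+\eps)} \Big[\frac{1}{\eps}\int_{((u-\eps)\vee 0,\,u\wedge T)} (b_{s+\eps}(X)-b_s(X))\,\phi(X,|X_u-X_s|)^2\,ds\Big]\, db_u(X).
\end{align*}

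Working on the full-measure event on which $X$ has continuous (hence bounded) paths, set $K=\|X\|$ and $C_1:=\sup_{y\le 2K}\phi(X,y)$, which is finite by the first hypothesis on $\phi$. Using the trivial bound $b_{s+\eps}(X)-b_s(X)\le b_T(X)-b_0(X)$ and pulling $\sup_{s\in[(u-\eps)\vee 0,\,u]}\phi(X,|X_u-X_s|)^2$ out of the inner $ds$-integral, the bracket is dominated by $(b_T(X)-b_0(X))\,\sup_{s\in[(u-\eps)\vee 0,\,u]}\phi(X,|X_u-X_s|)^2$. Continuity of $X$ at $u$ together with $\lim_{y\searrow 0}\phi(X,y)=0$ forces this supremum to $0$ for every fixed $u$ as $\eps\to 0$, while it remains uniformly dominated by the $db_u$-integrable constant $(b_T(X)-b_0(X))\,C_1^2$. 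Dominated convergence against the finite measure $db_u(X)$ then yields $E^\eps\to 0$ almost surely, and therefore in probability.

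The only careful piece of bookkeeping is the Fubini/Tonelli exchange --- one must respect that the hypothesis restricts the inner measure to the open interval $(s,s+\eps)$ and that $db_u(X)$ may carry atoms --- but since the integrand is non-negative, this reduces to a routine Tonelli computation and is not a serious obstacle. Apart from this, the proof is entirely pathwise and uses no structural property of $X$ beyond continuity, exactly as claimed in the statement.
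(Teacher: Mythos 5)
Your proof is correct and takes essentially the same route as the paper's: both arguments are pathwise, swap the $ds$- and $db_u$-integrations, and finish by dominated convergence against the finite measure $db_u(X)$, using continuity of $X$ and $\lim_{y\searrow 0}\phi(X,y)=0$ for the pointwise-in-$u$ limit. The only difference is cosmetic: you first apply Cauchy--Schwarz (introducing the factor $b_{s+\eps}(X)-b_s(X)$, then bounded by the total variation) and then Tonelli, whereas the paper applies Minkowski's integral inequality directly to $\sqrt{E^\eps}$, which avoids that extra factor; both lead to the same dominated-convergence conclusion.
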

	\begin{proof}
	We first notice that, {since} $X$ is a continuous process, then, for all $u \in [0,T]$,
	$$
		\frac1\eps \int_{(u-\eps)\vee 0}^{u}  \phi \big(X, |X_{u}-X_{s }|\big)^2 ds  \longrightarrow \phi \big(X, 0 \big)^2
		=0, 
		~\mbox{a.s., as}~
		\eps \longrightarrow 0.
	$$
	Recall the definition of $E^{\eps}$ in \eqref{eq: cond J2},   and define the process $B$ with finite variations by $B := b(X)$.
	{Then,} Minkowski's integral inequality {implies} that
	$$
		\sqrt{E^{\eps}} ~\le~ \int_0^T \Big( \frac{1}{\eps} \int_{(u-\eps)\vee 0}^{u}  \phi \big(X, |X_{u}-X_{s }|\big)^2 ds \Big)^{1/2} d B_u
		\longrightarrow   0, ~\mbox{a.s.},
	$$
	which concludes the proof.
	\end{proof}

	\begin{Example}  \label{exam:J2} 
		Assume that there exists a family of signed measures 
		$(\mu(\cdot;t,\xr), \;(t,\xr) \in [0,T]\x D([0,T])$, which is dominated by a non-negative finite measure $\hat \mu$, and a locally bounded map $m:[0,T]\x \DT\x \DT\mapsto \R$ 
		such that,
		for all $t\le T\;\mbox{ and }\xr,\xr'\in D([0,T])$ satisfying $\xr_{t}=\xr'_{t}$,
		\begin{align}\label{eq: conti vp}
			F(t,\xr)-F(t,\xr')
			=
			\int_{[0,t)} \big( \xr_{s}-\xr'_{s} \big) \mu(ds;t,\xr) +  o \big( \|\xr_{t\wedge }-\xr'_{t\wedge }\| \big)m(t,\xr,\xr').
		\end{align}
		Then, one has $\partial_{\lambda } F_{t}(\xr+\lambda(\xr'-\xr))=\int_{0}^{t} (\xr'_{s}-\xr_{s}){\mu(ds;t,\xr+\lambda(\xr'-\xr))}$. 
		It follows that
		\begin{align*}
			F_{s+\eps}(X)- F_{s+\eps}(X_{s \wedge }\oplus_{s +\eps} (X_{s+\eps}-X_s))
			&=\int_{0}^{1} \partial_{\lambda } F_{s+\eps}(X^{\eps,\lambda})d\lambda\\
			&=\int_{0}^{1} \int_{(s,s+\eps)} (X_{s}-X_{u})\mu(du;s+\eps,X^{\eps,\lambda}) d\lambda,
		\end{align*}
		with $X^{\eps}:=X_{s \wedge }\oplus_{s +\eps} (X_{s+\eps}-X_s)$ and $X^{\eps,\lambda}:=X^{\eps}+\lambda(X-X^{\eps})$. 
		As $(\mu(\cdot;t,\xr))_{t,\xr}$ is dominated by $\hat \mu$, letting $\hat b_u := \hat \mu([0,u])$, one has 
		$$
			\big |F_{s+\eps}(X)- F_{s+\eps}(X_{s \wedge }\oplus_{s +\eps} (X_{s+\eps}-X_s)) \big|
			~\le~
			\int_{(s,s+\eps)} |X_{s}-X_{u}|d \hat b_{u}.
		$$
		Then, \eqref{eq: cond J2} holds true when $X$ has continuous paths, 
		by Proposition \ref{prop: cond pour J2 sans carre}.
 
		\vspace{0.5em}

		Notice that a Fr\'echet differentiable function in the sense of Clark \cite{clark1970representation} satisfies \eqref{eq: conti vp}.
		The difference is that we only need to check  \eqref{eq: conti vp} for paths $\xr$ such that $\xr_{t}=\xr'_{t}$.
	\end{Example}

 	When $X$ is a semimartingale, we can also {exploit} its semimartigale property to obtain sufficient conditions for \eqref{eq: cond J2}.
 
	\begin{Proposition} \label{prop: cond pour J22}
		Assume that, for all $\xr \in \DT$, $s \in [0,T]$ and $\eps \in [0, T-s]$,
		$$
			\big| F_{s+\eps}(\xr)- F_{s+\eps}(\xr_{s \wedge }\oplus_{s +\eps} (\xr_{s+\eps}-\xr_{s})) \big|
			~\le~
			\phi\big(\xr,\| \xr_{ (s+\eps)\wedge }-  \xr_{s \wedge }\|, \eps \big),	
 		$$
		where $\phi: C([0,T] \x \R_+ \x \R_+ \longrightarrow \R$ satisfies
		$$
			\lim_{\eps \searrow 0, ~y \searrow 0}\sup_{\|\xr\|\le K} {| \phi(\xr,y, \eps)|}/{y} = 0,
			~\mbox{for all}~
			K\ge 0.
		$$
		Assume in addition that $X$ is a continuous semimartingale.
		Then \eqref{eq: cond J2} holds true.
	\end{Proposition}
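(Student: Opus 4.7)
My plan is to control $E^{\eps}$ by combining the pointwise structural bound on $F$ with semimartingale moment estimates on the path-uniform increments of $X$. The strategy proceeds in three movements: localize to bound $X$ and its characteristics; exploit the joint smallness of $\phi$ together with the uniform continuity of the paths of $X$ to linearize the integrand as $\delta^{2}$ times squared path-increments; then verify that the resulting integral is bounded in $L^{1}$ uniformly in $\eps$, via Doob's $L^{2}$ inequality and Fubini.

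Concretely, I decompose $X=X_{0}+M+A$ with $M$ a continuous local martingale and $A$ continuous of finite variation, and, by stopping times, reduce to the case where $\|X\|$, $\langle M\rangle_{T}$ and the total variation $|A|_{T}$ are all bounded by a constant $K$. Note that $\|X_{(s+\eps)\wedge }-X_{s\wedge }\|=\sup_{u\in[s,(s+\eps)\wedge T]}|X_{u}-X_{s}|$, so by the a.s.~uniform continuity of the paths of $X$ on $[0,T]$ the random quantity $\omega(\eps):=\sup_{s\in[0,T-\eps]}\|X_{(s+\eps)\wedge }-X_{s\wedge }\|$ tends to $0$ almost surely as $\eps\searrow 0$. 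Fix $\delta>0$: the hypothesis on $\phi$ delivers $\eps_{0}>0$ such that $|\phi(\xr,y,\eps)|\le\delta y$ whenever $\|\xr\|\le K$, $y\le\eps_{0}$ and $\eps\le\eps_{0}$. On the event $A_{\eps}:=\{\omega(\eps)\le\eps_{0}\}$, whose probability tends to $1$, and for $\eps\le\eps_{0}$, the structural bound then yields
$$
 E^{\eps}\1_{A_{\eps}} \;\le\; \delta^{2}\int_{0}^{T}\frac{1}{\eps}\|X_{(s+\eps)\wedge }-X_{s\wedge }\|^{2}\,ds.
$$

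It remains to control the integral on the right uniformly in $\eps$. Using $\|X_{(s+\eps)\wedge }-X_{s\wedge }\|^{2}\le 2\sup_{u\in[s,s+\eps]}|M_{u}-M_{s}|^{2}+2(|A|_{s+\eps}-|A|_{s})^{2}$, Doob's $L^{2}$ inequality together with Fubini gives
$$
 \E\Big[\int_{0}^{T}\frac{1}{\eps}\sup_{u\in[s,s+\eps]}|M_{u}-M_{s}|^{2}\,ds\Big]
 \le 4\int_{0}^{T}\frac{1}{\eps}\E[\langle M\rangle_{s+\eps}-\langle M\rangle_{s}]\,ds
 \le 4\E[\langle M\rangle_{T}]\le 4K,
$$
while bounding one factor by $|A|_{T}\le K$ yields $\int_{0}^{T}\eps^{-1}(|A|_{s+\eps}-|A|_{s})^{2}\,ds\le K^{2}$. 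Hence $\E[E^{\eps}\1_{A_{\eps}}]\le C\delta^{2}$ for some $C=C(K)$ independent of $\eps$. Markov's inequality then gives $\P(E^{\eps}>\alpha)\le\P(A_{\eps}^{c})+C\delta^{2}/\alpha$ for every $\alpha>0$; sending first $\eps\searrow 0$ and then $\delta\searrow 0$ concludes that $E^{\eps}\to 0$ in probability.

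The delicate point driving the proof is that the hypothesis on $\phi$ involves a \emph{joint} limit in $(y,\eps)$: one needs the path-increment $\|X_{(s+\eps)\wedge }-X_{s\wedge }\|$ to be small uniformly in $s$ at the same time $\eps$ shrinks. This is precisely what the a.s.~uniform continuity of the continuous semimartingale $X$ guarantees, and it is the reason the proof isolates the event $A_{\eps}$ in order to replace $\phi(\xr,y,\eps)$ by $\delta y$ before invoking the semimartingale $L^{2}$ estimates.
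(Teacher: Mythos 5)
Your proof is correct and follows essentially the same route as the paper's: extract the ``$\phi/y\to 0$'' factor using the a.s.\ uniform continuity of $X$ on $[0,T]$ (the paper packages this into a random constant $C_\eps\to 0$, you use a $\delta$-$\eps_0$ argument with the event $A_\eps$ — same idea), and then control $\int_0^T\eps^{-1}\|X_{(s+\eps)\wedge}-X_{s\wedge}\|^2\,ds$ by the usual maximal/Doob estimate plus Fubini via the canonical decomposition of the semimartingale (the paper splits into martingale and increasing components after a reduction to $d=1$, you keep the $M+A$ decomposition; equivalent). Your version is somewhat more explicit about the localization and the Markov-inequality conclusion, but there is no substantive difference in method.
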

	\proof We first notice that
	$$
		\int_{0}^T
		\frac{ \phi(X,\| X_{ (s+\eps) \wedge }- X_{s \wedge }\|, \eps)^2}{\| X_{(s+\eps)\wedge }- X_{s \wedge }\|^2}
		\frac{\| X_{ (s+\eps) \wedge }- X_{s \wedge }\|^2}{\eps} ds
		~\le~ 
		C^2_{\eps}  \int_{0}^T \frac{1}{\eps} \| X_{ (s+\eps) \wedge }- X_{s \wedge }\|^{2} ds,
	$$
	where 
	$$
		C_{\eps}
		:=
		\sup_{s \in [0,T]} \frac{ \phi(X,\|X_{(s+\eps)\wedge }-X_{s \wedge }\|, \eps)}{\| X_{ (s+\eps)\wedge }- X_{s\wedge }\|} 
		\longrightarrow 0,
		~~\mbox{as}~\eps \longrightarrow 0.
	$$
	Further, up to adding additional components, one can assume that each component of $X$ is a martingale or a non-decreasing process. 
	We therefore assume this and it suffices to consider the one dimensional case. 
	Since $X$ is a martingale or a non-decreasing process, there exists $C>0$ such that 
	$$
		\E \Big[ \!\int_{0}^T \!\! \frac{1}{\eps} \big\| X_{ (s+\eps) \wedge }- X_{s\wedge } \big\|^{2} ds \Big]
		\le 
		C  \E\Big[\! \int_{0}^T \!\! \frac{1}{\eps} \big| X_{s+\eps}- X_{s} \big|^{2}ds \Big] 
		\longrightarrow C  \E \big [ X \big]_T,
		~\mbox{in probability}.
	$$
	This is enough to prove that $E^{\eps} \longrightarrow 0$ in probability, so that \eqref{eq: cond J2} holds.
	\endproof

	By combining the conditions in Propositions \ref{prop: cond pour J2 sans carre} and \ref{prop: cond pour J22}, one obtains immediately new sufficient conditions for \eqref{eq: cond J2}.

	\begin{Corollary}
		Assume that $X = (X^1, X^2)$, where $X^1$ is a continuous process, $X^2$ is a continuous semimartingale,
		and, for all $0 \le s \le s + \eps \le T$ and $\xr = (\xr^1, \xr^2) \in \DT$,
		$$
			\big|F_{s+\eps}(\xr)- F_{s+\eps}(\xr_{s \wedge }\oplus_{s +\eps} (\xr_{s+\eps}-\xr_s)) \big|
			\le
			\int_{[s, s+\eps]} \!\!\!\!\! \phi_1 \big(\xr, |\xr_{u} - \xr_{s }|\big)db_u (\xr)
			+
			\phi_2 \big(\xr,\| \xr^2_{ (s+\eps)\wedge }-  \xr^2_{s \wedge }\|, \eps \big),
		$$
		where $(\phi_1, b)$ satisfies the conditions in Proposition \ref{prop: cond pour J2 sans carre}, and $\phi_2$ satisfies the conditions in Proposition \ref{prop: cond pour J22}.
		Then \eqref{eq: cond J2} holds true.
	\end{Corollary}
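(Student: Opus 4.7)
The statement is essentially a decoupling of the two hypotheses already treated separately in Propositions \ref{prop: cond pour J2 sans carre} and \ref{prop: cond pour J22}, so the plan is to use the elementary inequality $(a+b)^2 \le 2a^2 + 2b^2$ to split $E^\eps$ into two contributions and then invoke each proposition for the corresponding piece.

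More precisely, I would first square the given bound and integrate to obtain
$$
E^\eps \;\le\; 2 E^{\eps}_{(1)} + 2 E^{\eps}_{(2)},
$$
where
$$
E^{\eps}_{(1)} := \frac{1}{\eps}\int_{0}^{T}\Big(\int_{[s,s+\eps]} \phi_1\big(X,|X_u-X_s|\big)db_u(X)\Big)^{\!2} ds,
\quad
E^{\eps}_{(2)} := \frac{1}{\eps}\int_{0}^{T} \phi_2\big(X,\|X^2_{(s+\eps)\wedge}-X^2_{s\wedge}\|,\eps\big)^{\!2}ds.
$$
For $E^{\eps}_{(1)}$, since $X = (X^1,X^2)$ has continuous paths (both components being continuous), the proof of Proposition \ref{prop: cond pour J2 sans carre} applies verbatim: Minkowski's integral inequality gives $\sqrt{E^{\eps}_{(1)}} \le \int_0^T (\eps^{-1}\int_{(u-\eps)\vee 0}^u \phi_1(X,|X_u-X_s|)^2 ds)^{1/2} dB_u$, and dominated convergence together with the continuity of $X$ and the assumption $\phi_1(\xr,y) \to 0$ as $y \searrow 0$ yield $E^{\eps}_{(1)} \longrightarrow 0$ almost surely.

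For $E^{\eps}_{(2)}$, I would apply Proposition \ref{prop: cond pour J22} to the semimartingale $X^2$: the factorization
$$
E^{\eps}_{(2)} \;\le\; C_\eps^2 \int_{0}^{T} \frac{\|X^2_{(s+\eps)\wedge} - X^2_{s\wedge}\|^2}{\eps}\,ds
$$
with $C_\eps := \sup_{s \in [0,T]} \phi_2(X,\|X^2_{(s+\eps)\wedge}-X^2_{s\wedge}\|,\eps)/\|X^2_{(s+\eps)\wedge}-X^2_{s\wedge}\| \to 0$ almost surely (using $\|X^2_{(s+\eps)\wedge}-X^2_{s\wedge}\| \to 0$ uniformly in $s$ by continuity and the limit assumption on $\phi_2$), combined with the fact that the expectation of the integral factor is bounded uniformly in $\eps$ (after standard localization of $[X^2]_T$), gives $E^{\eps}_{(2)} \longrightarrow 0$ in probability. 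Adding the two estimates yields \eqref{eq: cond J2}.

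There is no genuine obstacle here; the only mild care needed is to ensure that the continuity hypothesis used in Proposition \ref{prop: cond pour J2 sans carre} remains valid for the full path $X = (X^1,X^2)$ (which it does since $X^2$ is continuous as a semimartingale), and that the localization step in Proposition \ref{prop: cond pour J22} for $X^2$ is not disrupted by the presence of $X^1$ (which it is not, since the argument depends only on the quadratic increments of $X^2$). The proof therefore reduces to invoking the two propositions and is short.
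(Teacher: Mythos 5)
Your proof is correct and follows exactly the route the paper intends: the paper states this corollary with no explicit proof, remarking only that it follows "immediately" by combining Propositions~\ref{prop: cond pour J2 sans carre} and~\ref{prop: cond pour J22}, and your splitting of $E^\eps$ via $(a+b)^2\le 2a^2+2b^2$ followed by the two propositions applied to the respective pieces is precisely that combination. The two side remarks you make (that $X=(X^1,X^2)$ is continuous so the Minkowski argument applies, and that localization of $[X^2]$ is unaffected by $X^1$) are the right sanity checks.
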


 \section{Applications in mathematical finance}
 \label{sec:Applications}

	In {frictionless financial models},
	under the no-arbitrage (in the sense of no free lunch with vanishing risk) {assumption},
	the prices of tradable financial assets need to be  semimartingales, see e.g.~\cite{delbaen2006mathematics}.
	If the pricing function of a financial derivative is $\mathbb{C}^{0,1}(\Theta)$, then one can apply the It\^o's formula in Theorems \ref{thm: Ito}  to {characterize} the martingale part of the derivative's price process,
	 and therefore   identify the hedging strategy.
	Below we provide  some examples of such applications in finance.

 \subsection{General formulations under $\Cb^{0,1}(\Theta)$-regularity condition}
 
 \subsubsection{Replication of path-dependent options}
 	{Let us consider a continuous martingale $X = (X_t)_{0 \le t \le T}$}, which represents the discount{ed} price of some risky asset, {and}
	  a path-dependent derivative with payoff $g(X)$ such that $\E[ | g(X) | ] < \infty$.
	Define
	\begin{equation} \label{eq:def_V_ClarkOcone}
		V(t,\xr) := \E \big[ g(X) \big| X_{t \wedge } = \xr_{t \wedge } \big],\;{(t,\xr)\in [0,T]\x D([0,T]).}
	\end{equation}

 	\begin{Proposition} \label{prop:replication}
		Assume that $V $ {belongs to} $\Cb^{0,1}(\Theta)$ {and} satisfies all the conditions {of} Theorem \ref{thm: Ito}.
		Then
		$$
			g(X) = \E \big[ g(X) \big] + \int_0^T \nabla_{\xr} V(t, X) dX_t.
		$$
	\end{Proposition}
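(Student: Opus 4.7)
The plan is to apply the functional It\^o formula of Theorem \ref{thm: Ito} to $V(\cdot,X)$, recognize that the orthogonal remainder must be a continuous local martingale, and then use self-orthogonality to force it to vanish.

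First, I would note that, by the very definition of $V$ in \eqref{eq:def_V_ClarkOcone} through a regular conditional distribution, the process $V(\cdot,X)$ coincides (up to indistinguishability) with the c\`adl\`ag version of the martingale $\E[g(X)\mid \Fc^X_t]$, where $\Fc^X_t := \sigma(X_{s}, s\le t)$. Since $V\in \Cb(\Theta)$ and $X$ has continuous paths, this c\`adl\`ag version is already continuous, so $V(\cdot,X)$ is a continuous $\F^X$-martingale. Moreover, $X$ itself remains a martingale with respect to $\F^X$ (being continuous and $\F^X$-adapted, with the $\F$-martingale property passing to the smaller filtration by the tower property).

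Next, since $V\in\Cb^{0,1}(\Theta)$ satisfies the hypotheses of Theorem \ref{thm: Ito} and $X$ is a continuous martingale (in particular a continuous weak Dirichlet process with finite quadratic variation, whose martingale part is $X$ itself), the theorem yields the decomposition
\begin{equation*}
V(t,X) \;=\; V(0,X) \;+\; \int_0^t \nabla_{\xr} V_s(X)\, dX_s \;+\; \Gamma^V_t,\qquad t\in[0,T],
\end{equation*}
for a continuous orthogonal process $\Gamma^V$ with $\Gamma^V_0 = 0$. Since $V(\cdot,X)$ is a continuous martingale and $\int_0^\cdot \nabla_{\xr} V_s(X)\,dX_s$ is a continuous local martingale, the process $\Gamma^V$ is itself a continuous local martingale.

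The key step is then to observe that $\Gamma^V$ is orthogonal to every continuous local martingale, hence, applied to $N=\Gamma^V$, one has $[\Gamma^V,\Gamma^V] = 0$. A continuous local martingale with vanishing quadratic variation and zero initial value is identically zero, so $\Gamma^V\equiv 0$. Evaluating at $t=T$, using $V(0,X)=\E[g(X)]$ and $V(T,X)=g(X)$, gives the desired identity. The only non-routine point I anticipate is the justification that $V(\cdot,X)$ inherits path-continuity and the martingale property under $\F^X$ so that Theorem \ref{thm: Ito} genuinely applies; once that is in place, the self-orthogonality argument closes the proof immediately.
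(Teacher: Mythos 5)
Your proof is correct, and it takes a mildly different route from the paper. The paper's proof is a one-liner: since $V(\cdot,X)$ and $-V(\cdot,X)$ are both supermartingales, apply Corollary \ref{coro:DoobMeyer} to each to conclude that $\Gamma^V$ is simultaneously non-increasing and non-decreasing, hence zero. You instead apply Theorem \ref{thm: Ito} directly, observe that $\Gamma^V$ is a continuous local martingale (difference of the martingale $V(\cdot,X)$ and the local-martingale stochastic integral), and then invoke self-orthogonality: since $\Gamma^V$ is orthogonal to \emph{every} continuous local martingale, in particular $[\Gamma^V,\Gamma^V]=0$, forcing $\Gamma^V\equiv 0$. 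Your argument is a bit more elementary in that it bypasses the Doob--Meyer decomposition underlying Corollary \ref{coro:DoobMeyer}; the trade-off is that the paper's route recycles an already-proved corollary and so is shorter on the page. Note that the self-orthogonality trick you use is implicitly what makes Corollary \ref{coro:DoobMeyer} work too (an orthogonal supermartingale has vanishing martingale part precisely because that part, being orthogonal to itself, has zero bracket), so the two proofs are close in substance. Your extra care about the filtration (passing to $\F^X$ so that $V(\cdot,X)$ is genuinely a martingale and $X$ remains one) is a reasonable clarification that the paper glosses over.
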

	\proof
		{Since} $V(t,X)$ and $-V(t,X)$ are both supermartingales,   the result follows {from} Corollary \ref{coro:DoobMeyer}.
	\qed

 	\begin{Remark}
		$\mathrm{(i)}$ The {above} result {can} be compared to \cite[ Theorem 5.2 ]{cont2013functional} {but}
		  we require less regularity conditions ($\Cb^{0,1}(\Theta)$ and \eqref{eq:CNS_C01} rather than $\Cb^{1,2}_b(\Theta)${, which implies \eqref{eq:CNS_C01} by Propositions \ref{prop: cn pour cns} and \ref{prop: cas C2}}).
		
		\vspace{0.5em}
		
		\noindent $\mathrm{(ii)}$ Let $X$ be a  diffusion process with dynamics
		$$
			X_t = X_0 + \int_0^t \mu(s, X) ds + \int_0^t \sigma(s, X) dW_t,
		$$
		{in which $W$ is a} Brownian motion and $(\mu,\sigma)$ are continuous, non-anticipative and Lipschitz in space.
		When $V \in \Cb^{1,2}(\Theta)$ in the sense of \cite[Theorem 4.1]{cont2013functional},  
		it is easy to deduce from their functional It\^o's formula that $V$ is a classical solution of the path-dependent PDE
		$$
			\partial_t V + \mu \cdot \nabla_{\xr} V + \frac12 \sigma\sigma^{\top} \cdot \nabla^2_{\xr} V = 0.
		$$
		  Without the $\Cb^{1,2}(\Theta)$--regularity condition,
		one can still prove that $V$ is a viscosity solution of the path-dependent PDE in the sense of \cite{ekren2016viscosity},
		{for which} numerical  algorithms can be found in  \cite{ren2017convergence,zhang2014monotone}.
	\end{Remark}
 
 	\begin{Remark}
		As already mentioned in \cite{dupireito} and \cite{cont2013functional}, the result {of} Proposition \ref{prop:replication} is consistent with the classical Clark-Haussmann-Ocone formula.
		Indeed, let $X$ be a continuous martingale with independent increments, and $g$ {be} Fr\'echet differentiable with derivative $\lambda_{g}$, then by the Clark-Haussmann-Ocone formula (see e.g. Haussmann \cite{haussmann1978functionals}),
		$$
			g(X) = \E[g(X)] + \int_0^T \E \big[ \lambda_{g}(X; [t,T]) \big | \Fc_t \big] dX_t.
		$$
		On the other hand,  for the value function $V$ in \eqref{eq:def_V_ClarkOcone}, 
		one can also compute the vertical derivative $\nabla_{\xr} V$ from its definition to obtain that
		$$
			\nabla_{\xr} V(t,\xr) = \E \big[ \lambda_{g}(X; [t,T])  \big | X_{t \wedge } = \xr_{t \wedge } \big].
		$$
	\end{Remark}
 
\subsubsection{Super-replication under model uncertainty}

 	Let us now denote by $\Om^{\circ} := D([0,T] , \R^d)$ the canonical space of $\R^d$-valued c\`adl\`ag paths on $[0,T]$,
	let $X$ be the canonical process, and $\F^{\circ} = (\Fc^{\circ}_t)_{t \in [0,T]}$ the canonical filtration.
	Let us denote by $\mathfrak{B}(\Om^{\circ})$ the space of all Borel probability measures on $\Om^{\circ}$.
	We consider a subset $\Pc \subset \mathfrak{B}(\Om^{\circ})$, such that $X$ is a $\P$--continuous local martingale satisfying $\P[X_0 = x_0] = 1$ for all $\P \in \Pc${, for some $x_{0}\in \R^{d}$}.
	Recall that, given a probability measure $\P$ on $(\Om^{\circ}, \Fc^{\circ}_T)$ and a $\F^{\circ}$--stopping time $\tau$ taking values in $[0,T]$, a r.c.p.d. (regular conditional probability distribution) of $\P$ conditional to $\Fc^{\circ}_{\tau}$ is a family $(\P_{\om})_{\om \in \Om}$ of probability measures on $(\Om^{\circ}, \Fc^{\circ}_T)$, such that $\om \mapsto \P_{\om}$ is $\Fc^{\circ}_{\tau}$--measurable, $\P_{\om}[X_s = \om_s, ~s \le \tau(\om)] = 1$ for all $\om \in \Om$, and  $\E^{\P}[\1_A | \Fc^{\circ}_{\tau}](\om) = \E^{\P_{\om}}[\1_A]$ for $\P$--a.e. $\om \in \Om^{\circ}$ for all $A \in \Fc^{\circ}_T$.
	Recall also that a subset $A$ of a Polish space $E$ is called an analytic set if there exists another Polish space $E'$ together with a Borel subset $B \subset E \x E'$ such that $A = \{x \in E ~: (x, x') \in B \}$.
	
	We further make the following assumptions.
	
	\begin{Assumption} \label{assum:DPP}
		One has $\Pc = \cup_{\om \in \Om^{\circ}} \Pc(0, \om)$, for a collection of families of probability measures $\big(\Pc(t,\om) \big)_{(t, \om) \in [0,T] \x \Om^{\circ}}$ on $\Om^{\circ}$.
		Moreover, for every $(t, \om) \in [0,T] \x \Om^{\circ}$:
		\begin{enumerate}
			\item $\Pc(t, \om) = \Pc(t, \om_{t \wedge })$, $\P[X_{t \wedge } = \om_{t \wedge }] = 1$ for all $\P \in \Pc(t, \om)$ and the graph set 
			{$$
				[[\Pc]] ~:=~ \big\{(t, \om, \P) ~: \P \in \Pc(t, \om) \big\}
			$$}
			 is an analytic subset of $[0,T] \x \Om^{\circ} \x \mathfrak{B}(\Om^{\circ})$.

			\item Let $\P \in \Pc(t, \om)$, $s \ge t$ and $(\P_{\om})_{\om \in \Om^{\circ}}$ be a family of regular conditional probability of $\P$ knowing $\Fc^{\circ}_s$, then $\P_{\om} \in \Pc(s, \om)$ for $\P$-a.e. $\om \in \Om^{\circ}$.
			
			\item Let $\P \in \Pc(t, \om)$, $s \ge t$ and $(\Q_{\om})_{\om \in \Om^{\circ}}$ be a family such that 
			$\om \mapsto {\Q}_{\om}$ is $\Fc^{\circ}_s$--measurable and
			$\Q_{\om} \in \Pc(s, \om)$ for $\P$-a.e. $\om \in \Om^{\circ}$, then
			$$
				\P \otimes_s \Q_{\cdot} \in \Pc(t, \om),
			$$
			where $\P \otimes_s \Q_{\cdot} $ is defined by
			$$
				\E^{\P \otimes_s \Q_{\cdot}}[ \xi] := \int_{\Om^{\circ}} \int_{\Om^{\circ}} \xi(\om') \Q_{\om}(d \om') \P(d \om),
				~\mbox{for all bounded r.v.}~\xi: \Om^{\circ} \longrightarrow \R.
			$$
		\end{enumerate}
	\end{Assumption}

 	Let $g: \Om^{\circ} \longrightarrow \R$ be such that $\sup_{\P \in \Pc} \E^{\P} \big[ \big| g(X) \big| \big] < \infty$, let us define
	$$
		V(t, \om) ~:=~ \sup_{\P \in \Pc(t, \om)} \E^{\P} \big[ g(X)\big],\; (t,\om)\in [0,T]\x D([0,T]).
	$$
	We simply write $V(0, x_0)$ for $V(0,\cdot)$. 
	We also denote by $\Hc$ the collection of all $\R^d$--valued $\F^{\circ}$--predictable processes $H$ such that $\int_0^T H^{\top}_t d \langle X \rangle_t H_t < \infty$, $\P$--a.s. and $\int_0^{\cdot} H_s dX_s$ is a $\P$--supermartingale, for all $\P \in \Pc$.
 
 	\begin{Proposition} \label{prop:superreplication}
 		Let Assumption \ref{assum:DPP} hold true, and suppose in addition that $(V, X, \P)$ satisfies the conditions {of} Theorem \ref{thm: Ito} for each $\P \in \Pc$.
		Then
		 $V(\cdot, X)$ is a $\P$--supermartingale for every $\P \in \Pc$ and
		\begin{equation} \label{eq:duality}
			V(0, x_0) = \inf\Big\{ x ~: x + \int_0^T H_t dX_t \ge g(X),~H \in \Hc,~ \P \mbox{--a.s. for all}~\P \in \Pc \Big\}.
		\end{equation}
		Moreover, the superhedging problem at the r.h.s. of \eqref{eq:duality} is achieved by {$H^*:= \nabla_{\xr} V(\cdot, X)$}.
	\end{Proposition}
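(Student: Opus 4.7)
The plan is to derive the supermartingale property of $V(\cdot,X)$ from a dynamic programming argument, feed this into Corollary \ref{coro:DoobMeyer} to obtain an explicit decomposition with non-increasing residual, and then read off super-replication by $H^*=\nabla_\xr V(\cdot,X)$ together with the matching lower bound.

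For the first point, I would use the standard dynamic programming principle under Assumption \ref{assum:DPP}. Analyticity of $[[\Pc]]$ (item~1), together with upper semianalyticity of $g$ (or a suitable truncation), makes $V$ upper semianalytic and permits measurable $\eps$-selections of near-optimizers $\om\mapsto\Q_\om$ in $\Pc(t,\om)$. Item~2 (stability under regular conditional probabilities) then gives, for every $\P\in\Pc$ and $s\le t$,
\[
\E^\P[V(t,X)\mid\Fc^\circ_s]\le V(s,X)\quad\P\text{-a.s.},
\]
while item~3 (concatenation) yields the reverse inequality needed in the DPP form. Combined with the terminal condition $V(T,X)=g(X)$, this shows that $V(\cdot,X)$ is a $\P$-supermartingale for every $\P\in\Pc$.

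Fix now $\P\in\Pc$. Under $\P$, $X$ is a continuous local martingale and $V(\cdot,X)$ is a continuous supermartingale, so Corollary \ref{coro:DoobMeyer} applied to $V$ gives
\[
V(t,X)=V(0,x_0)+\int_0^t \nabla_\xr V_s(X)\,dX_s+A^\P_t,\quad t\in[0,T],
\]
for some predictable non-increasing $A^\P$ with $A^\P_0=0$. Evaluating at $t=T$ and using $V(T,X)=g(X)$ together with $A^\P_T\le 0$ yields
\[
V(0,x_0)+\int_0^T \nabla_\xr V_s(X)\,dX_s\ \ge\ g(X)\quad\P\text{-a.s. for every }\P\in\Pc.
\]
To certify $H^*:=\nabla_\xr V(\cdot,X)\in\Hc$, I rewrite the decomposition as $\int_0^t H^*_s\,dX_s=V(t,X)-V(0,x_0)-A^\P_t$, which is a continuous local martingale bounded below by $V(t,X)-V(0,x_0)\ge -\E^\P[|g(X)|\mid\Fc^\circ_t]-V(0,x_0)$; the latter is $\P$-integrable, and the standard Fatou argument for local martingales bounded below by a martingale then provides the $\P$-supermartingale property, uniformly in $\P\in\Pc$.

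For the dual inequality, take any $(x,H)\in\R\times\Hc$ with $x+\int_0^T H_s\,dX_s\ge g(X)$ $\P$-a.s. for every $\P\in\Pc$. Taking $\E^\P$ and using $\E^\P[\int_0^T H_s\,dX_s]\le 0$, which follows from the definition of $\Hc$, yields $x\ge\E^\P[g(X)]$ for every $\P\in\Pc$, hence $x\ge V(0,x_0)$. Combined with the previous paragraph this gives the equality in \eqref{eq:duality} and the optimality of $H^*$. The main conceptual hurdle is the DPP of the first paragraph: the analytic structure of $[[\Pc]]$ together with the stability/concatenation properties of Assumption \ref{assum:DPP} is exactly what is required to run the measurable-selection arguments (in the spirit of Nutz--van Handel / El Karoui--Tan); once this is in hand, the remainder of the proof is a direct consequence of the $\Cb^{0,1}$-functional It\^o decomposition of Corollary \ref{coro:DoobMeyer}.
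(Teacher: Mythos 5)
Your proposal is correct and follows essentially the same route as the paper: dynamic programming from Assumption \ref{assum:DPP} gives the supermartingale property of $V(\cdot,X)$ under each $\P\in\Pc$, then Corollary \ref{coro:DoobMeyer} produces the decomposition with non-increasing residual, and evaluating at $T$ gives the super-replication inequality, with weak duality supplied by the supermartingale requirement in the definition of $\Hc$. The one place where you go beyond the paper's written argument is the explicit Fatou-type check that $H^*=\nabla_\xr V(\cdot,X)$ actually lies in $\Hc$ (i.e.~that $\int_0^\cdot H^*_s\,dX_s$ is a $\P$-supermartingale for each $\P$), which the paper leaves implicit; that verification is correct and worth recording, since the decomposition alone only makes the stochastic integral a local martingale.
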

	\proof First, it is clear that one has the weak duality
	$$
		V(0, x_0) ~\le~ \inf\Big\{ x \in \R ~: x + \int_0^T H_t dX_t \ge g(X),~H \in \Hc,~ \P \mbox{--a.s. for all}~\P \in \Pc \Big\}.
	$$
	Next, {our} stability conditions  under conditioning and concatenation {of} Assumption \ref{assum:DPP} {imply the dynamic programming principle}.
	\begin{equation} \label{eq:DPP}
		V(t, \om) = \sup_{\P \in \Pc(t, \om)} \E^{\P} \big[ V(t+h, X) \big],
	\end{equation}
	{see e.g.~\cite{karoui2013capacities, el2013capacities}.}
	Together with the fact that $V \in \mathbb{C}^{0,1}$, this implies that $V(\cdot, X)$ is a  $\P$--continuous supermartingale for every $\P \in \Pc$.
	By Corollary \ref{coro:DoobMeyer}, one has
	$$
		V(0, x_0) + \int_0^T  {  \nabla_{\xr} V(t, X)}dX_t \ge g(X), ~\P \mbox{--a.s. for all}~\P \in \Pc.
	$$
	This implies   the duality result   \eqref{eq:duality} as well as {the fact that} $H^* := \nabla_{\xr} V(\cdot, X)$ is the optimal superhedging strategy.
	\qed
 
 	\begin{Remark}
		The duality result \eqref{eq:duality} in the model independent setting has been much investigated, see e.g.~ \cite{denis2006theoretical,neufeld2013superreplication}.
		In most cases, one {obtains} the existence of {an} optimal strategy $H^*$ but without an explicit expression.
		{I}n \cite{neufeld2013superreplication}, the duality is obtained for just measurable payoff functions $g$, but they require {$\Pc$ to be made of extremal} martingale measures.
		Our duality result {of} Proposition \ref{prop:superreplication} does not requires $\P \in \Pc$ to be extremal, but {requires} regularity conditions on the value function $V$.
		{This in turn allows us to characterize} the optimal superhedging strategy  explicitly as {the} Dupire's vertical derivative of the {pricing} function, 
		which also justify {the} initial motivation {of Dupire} \cite{dupireito} to introduce this notion of derivative.
	\end{Remark}

	\begin{Remark}
		The main idea in Propositions \ref{prop:replication} and \ref{prop:superreplication} is to show that the replication or super-replication prices of the  options are supermartingales, so that one can apply the Doob-Meyer decomposition result {of} Corollary \ref{coro:DoobMeyer}.
		We can also apply the same technique to other situations, such as the hedging of American options, the superhedging problems under constraints, etc., 
		{in which} the {option} price process {has} a natural supermartingale structure (see e.g.~\cite{bouchard2016fundamentals}).
	\end{Remark}

\subsection{Verification of the $\Cb^{0,1}(\Theta)$-regularity in a model with (bounded) uncertain volatility}

	Let us consider a more concrete superhedging problem in the context of an uncertain volatility model.
	Let $d=1$, $x_0 \in \R$, $0 \le \underline \sigma < \overline \sigma$ be fixed,
	we denote by  $\Pc_0$ the collection of all probability measures $\P$ such that $\P[X_0 = x_0] = 1$ and
	\begin{equation} \label{eq:vol_uncert}
		d X_s =  \sigma_s  dW^{\P}_s,~\sigma_s \in [\underline \sigma , \overline \sigma],~s \in [0,T], ~\P\mbox{-a.s.}
	\end{equation}
	for some $\P$--Brownian motion $W^{\P}$.
	We then consider a derivative option with payoff function $g:D([0,T]) \longrightarrow \R$ satisfying the following conditions.
	
	\begin{Assumption} \label{assum:Condition_g}
		$\mathrm{(i)}$ The function $g$ is bounded,
		and there exist  $\alpha \in (0,1]$ and  a finite positive measure $\mu$ on $[0,T]$ with at most finitely many atoms such that,
		for all $\xr,\xr'\in \DT$,   $B = [s, t) \subset [0,T]$ and $\delta\in \R$,
		\begin{equation}\label{eq: g Lispch}
			|g(\xr)-g(\xr')|\le  \int_{0}^{T}|\xr_{s}-\xr_{s}'|\mu(ds),
		\end{equation} 
		$\delta'\in \R\mapsto g(\xr+\delta'\1_{B})$   is differentiable and 
		\begin{equation}\label{eq: measure sur derive}
			\Big| \frac{d g(\xr+\delta\1_{B}+\xr')}{d\delta} - \frac{ d  g(\xr+\delta\1_{B})}{d \delta} \Big |
			\le 
			\Big( \int_{0}^{T} \big| \xr'_{s} \big| \mu(ds) \Big)^{\alpha} \mu(B).
		\end{equation}
 		
		\noindent $\mathrm{(ii)}$ for any increasing sequence $0=t_{0}<t_{1}<\cdots<t_{n}=T$ with $\max_{i<n}|t_{i+1}-t_{i}|$ small enough, 
		for all $1\le i<j< n$, there exists  $p^{i,j} \neq 0$ such that, 
		for all   $\delta \in \R$, and $(x_{\ell})_{0\le \ell \le n-1}\subset \R^{n}$,
		\begin{align} \label{eq: hyp pi}
			&g\left( \sum_{\ell=0}^{n-1} (x_{\ell}+ { \delta} \1_{\{\ell = i\}})\1_{[t_{\ell},t_{\ell+1})} + \1_{\{T\}} x_{{n-1}}\right) \nonumber \\
			&= g\left(\sum_{\ell=0}^{n-1} (x_{\ell}+p^{i,j} {\delta}\1_{\{\ell\ge j\}})\1_{[t_{\ell},t_{\ell+1})} + \1_{\{T\}} (x_{{n-1}}+p^{i,j} {\delta}) \right).
		\end{align}
	\end{Assumption}

	\begin{Remark}
		Let 
		$$
			g(X) ~=~ g_{\circ} \Big( \int_0^T X_t \mu_0(dt) \Big),
		$$
		where $g_{\circ} \in C^{1+\alpha}(\R)$ is bounded, and $\mu_0$ is a finite positive measure with at most  finitely many atoms on $[0,T]$ satisfying $\mu_0([T-h, T]) \neq 0$ for all $h > 0$ small enough.
		Then it satisfies Assumption \ref{assum:Condition_g}.
	\end{Remark}

	For each $(t, \xr) \in [0,T] \x D([0,T])$, we define
	\begin{equation} \label{eq:def_Pct}
		\Pc(t, \xr) :=  
		\big\{ \P  \in   \mathfrak{B}(\Om^{\circ})~: 
			\P [ X_{t \wedge } = \xr_{t \wedge } ] = 1,~\mbox{and \eqref{eq:vol_uncert}~holds on}~[t, T]
		\big\},
	\end{equation}
	and
	$$
		V(t,\xr) ~:= \sup_{\P \in \Pc(t, \xr) }\E^{\P} \big[ g(X) \big].
	$$
	\begin{Proposition}
		Let $\Pc_0$ and $g$ be given as above.
		Then, $V$ is  vertically differentiable  and the duality result \eqref{eq:duality} holds true with the optimal superhedging strategy $H^* := \nabla_{\xr} V(\cdot, X)$.
	\end{Proposition}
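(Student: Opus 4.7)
The strategy is to verify the hypotheses of Proposition \ref{prop:superreplication}. Assumption \ref{assum:DPP} for the family $\Pc(t,\xr)$ defined in \eqref{eq:def_Pct} is standard in the uncertain volatility literature: the analyticity of $[[\Pc]]$ follows from the measurable characterization of local martingales with bracket in $[\underline\sigma^{2}t, \overline\sigma^{2}t]$, while the stability under conditioning and concatenation is classical. Combined with the boundedness of $g$ and the Lipschitz estimate \eqref{eq: g Lispch}, the dynamic programming principle \eqref{eq:DPP} then yields the $\P$-supermartingale property of $V(\cdot, X)$ for every $\P \in \Pc$, together with uniform continuity of $V$ in the sup-norm. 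The core difficulty is to establish that $V \in \Cb^{0,1}(\Theta)$ with $\nabla_{\xr} V \in \Cb^{u,b}_{\rm loc}(\Theta)$, and to verify condition \eqref{eq: cond J2}.

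For the $\Cb^{0,1}$-regularity, I would discretize: for each partition $\pi = \{0 = t_{0} < \cdots < t_{n} = T\}$ with small mesh, replace $g$ by its projection $g^{\pi}$ onto piecewise-constant paths on $\pi$, and introduce the associated value function $V^{\pi}$. At each node $t_{k}$, $V^{\pi}(t_{k},\cdot)$ is a function $v_{k}^{\pi} : \R^{k+1} \to \R$ which, by the DPP, solves the Black--Scholes--Barenblatt equation
$$
\partial_{t} v_{k}^{\pi} + \tfrac12 \sup_{\sigma \in [\underline\sigma, \overline\sigma]} \sigma^{2} \, \partial^{2}_{x_{k}} v_{k}^{\pi} = 0
\quad\mbox{on}\ (t_{k}, t_{k+1}),
$$
with terminal datum inherited from $v_{k+1}^{\pi}$. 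Classical interior Evans--Krylov and Schauder estimates for uniformly parabolic Isaacs/HJB equations yield $C^{1,\alpha}$ bounds on the last-variable gradient $\partial_{x_{k}} v_{k}^{\pi}$, with constants controlled by $\underline\sigma$, $\overline\sigma$, $\alpha$, and the norms of $g^{\pi}$ provided by \eqref{eq: g Lispch}--\eqref{eq: measure sur derive}. The role of the structural condition \eqref{eq: hyp pi} is crucial here: it identifies any spatial perturbation at an earlier index $i$ with a proportional ``from index $j$ onwards'' perturbation, with explicit factor $p^{i,j}$, so that the vertical derivative of the path-functional at $t_{k}$ can be read off the one-dimensional PDE gradient at the current node, scaled by the coefficient $p^{i,k}$. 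Iterating this backward, one obtains uniform-in-$\pi$ gradient estimates.

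The next step is to pass to the limit $|\pi|\to 0$. From \eqref{eq: g Lispch} one deduces $\|g^{\pi} - g\|_{\infty}\to 0$ and hence $V^{\pi} \to V$ locally uniformly on $\Theta$, and the uniform $C^{1,\alpha}$ bounds yield, via Arzel\`a--Ascoli, a continuous limit gradient which one identifies with the Dupire vertical derivative $\nabla_{\xr}V$ by checking the defining difference quotient. The uniformity of the H\"older estimate and a standard modulus argument deliver the left-continuity and the local-uniform-continuity \eqref{eq:unif_continuity_DF} required by $\Cb^{u,b}_{\rm loc}(\Theta)$. Finally, \eqref{eq: measure sur derive} is precisely the passage-to-the-limit form of a Fr\'echet-type bound: it provides a dominating finite measure $\hat\mu$ and puts $V$ in the framework of Example \ref{exam:J2}, so that Proposition \ref{prop: cond pour J2 sans carre} gives \eqref{eq: cond J2}, hence \eqref{eq:CNS_C01}, for any continuous $X$. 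All hypotheses of Theorem \ref{thm: Ito} being verified, Proposition \ref{prop:superreplication} applies and yields the duality \eqref{eq:duality} together with $H^{*} = \nabla_{\xr} V(\cdot, X)$.

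The main obstacle will be the regularity step: obtaining the uniform-in-$\pi$ H\"older estimate on the discrete spatial gradient without the constants blowing up as $n \to \infty$, and correctly identifying its limit with the Dupire derivative of the fully path-dependent $V$. Standard BSB theory gives $C^{1,\alpha}$ control only in the \emph{last} spatial variable at each backward step, so the algebraic compatibility encoded in \eqref{eq: hyp pi}, together with the Lipschitz-in-measure structure \eqref{eq: g Lispch}--\eqref{eq: measure sur derive}, is essential to reduce an ``earlier-time'' vertical perturbation to one that is handled by the PDE at the current node; checking that these operations commute with the limit $|\pi|\to 0$ is where most of the work lies.
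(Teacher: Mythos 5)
Your outline reproduces the paper's architecture faithfully: check Assumption \ref{assum:DPP} to get the DPP and the supermartingale property, then discretize the payoff onto a grid $\pi^n$, solve the resulting Black--Scholes--Barenblatt system backward, use \eqref{eq: hyp pi} to channel every vertical perturbation through the last spatial coordinate (scaled by $p^{i,j}$), pass to the limit to identify the Dupire derivative, and verify \eqref{eq: cond J2} via Proposition \ref{prop: cond pour J2 sans carre}/Example \ref{exam:J2}. You also correctly flag that the uniform-in-$n$ gradient regularity is the crux of the matter. Two genuine gaps remain, however.

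First, the invocation of ``classical interior Evans--Krylov and Schauder estimates for uniformly parabolic Isaacs/HJB equations'' does not apply here. The standing hypothesis is $0 \le \underline\sigma < \overline\sigma$, so the operator $F$ may be degenerate, and uniform parabolicity fails. Even when $\underline\sigma>0$, interior estimates degenerate near the nodes $t^n_i$ and the time interval lengths shrink as $|\pi^n|\to 0$, so one cannot conclude that the constants are uniform in $n$ without further work. The paper avoids both issues: it first replaces $F$ by a $C^\infty$, uniformly parabolic $F_k$ (with $\underline\sigma_k^2 = \underline\sigma^2 \vee k^{-1}$) and mollifies $g^n$ into $g^n_k$; then it differentiates the PDE and uses a Feynman--Kac representation of $\phi^{n,j}_k = D_j v^n_k$, so the gradient bound and the H\"older-in-space estimate \eqref{eq: borne C1+alpha si smooth nk bis} are inherited directly from \eqref{eq:estimation_gnk} and are manifestly uniform in both $n$ and $k$. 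The time-modulus \eqref{eq: borne holder temps nk bis}, with its exponent $\alpha/(2+2\alpha)$, is then derived by an interpolation argument using the value-function modulus of Proposition \ref{eq: v C 0+alpha 0+1} together with the space H\"older bound and the representation \eqref{eq:CtrlPb_vnk} of $v^n_k$ as a value function; it does not come from Schauder theory. If you wish to keep your Evans--Krylov/Schauder route you would need to explain how to make it uniformly elliptic and how to prevent the constants from blowing up as $n\to\infty$; the paper's probabilistic route sidesteps both.

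Second, you do not address the atoms of $\mu$. The modulus in \eqref{eq: unif conti en temps de nabla v} contains the term $\mu([t,t'))$, which does not vanish as $t'\downarrow t$ when $t$ is an atom, so $\nabla_{\xr}V$ fails to have the uniform time-continuity \eqref{eq:unif_continuity_DF} across atoms. This is precisely why the paper applies Corollary \ref{coro:DoobMeyer} only on the subintervals $[T_k+\eps,\,T_{k+1}]$ between consecutive atoms, then sums over $k$ and sends $\eps\to 0$. Without this splitting your argument does not yield Theorem \ref{thm: Ito}'s hypotheses on the whole of $[0,T]$, and the final duality claim is left unjustified at the atoms. These two points aside, the structure of your argument is that of the paper.
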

	\begin{proof}
	First, by rewriting $\P \in \Pc(t, \xr)$ as solution of a controlled martingale problem, it is easy to check that 
	the graph set $[[\Pc_0]]$ is a closed set, and satisfies the stability conditions under conditioning and concatenation (see e.g. \cite[Section 4]{el2013capacities}),
	so that Assumption \ref{assum:DPP} holds true.
	As in Proposition \ref{prop:superreplication}, one has the dynamic programming principle \eqref{eq:DPP}, 
	and consequently, $V(\cdot, X)$ is a $\P$--supermartingale for every $\P \in \Pc_0$ .
	
	\vspace{0.5em}

	Next, we assume that $\mu$ has possible atoms on $\{0 = T_0 < T_1 < \cdots < T_n = T\}$.
	Then by Propositions \ref{eq: v C 0+alpha 0+1} and \ref{prop: v C 0+alpha/2 1+alpha} below, 
	together with Propositions \ref{prop: cn pour cns} and \ref{prop: cond pour J2 sans carre},
	it follows that $V$ satisfies \eqref{eq:CNS_C01} and  the $\Cb^{0,1}$--regularity as well as other conditions  required in Theorem \ref{thm: Ito} on each interval $[T_k+\eps, T_{k+1}]$, for all $k=0, \cdots, n-1$, and all $\eps > 0$ small enough.
	Recalling that $V(\cdot, X)$ is a supermartingale under each $\P \in \Pc_0$, 
	it follows by Corollary \ref{coro:DoobMeyer} that
	$$
		V(T_{k+1}, X) - V(T_k+\eps, X) 
		~\le~ 
		\int_{T_k+\eps}^{T_{k+1}} H^*_t dX_t,
		~\mbox{with}~
		H^*_t := \nabla_{\xr} V(t, X).
	$$
	Taking the sum on $k = 0, \cdots, n-1$ and then letting $\eps \longrightarrow 0$, 
	we can then conclude as in Proposition \ref{prop:superreplication} to obtain the duality result \eqref{eq:duality} and that $H^*$ is the optimal strategy.
 	\end{proof}

	\begin{Remark}
		The regularity property of $V$ is given in Propositions \ref{eq: v C 0+alpha 0+1} and \ref{prop: v C 0+alpha/2 1+alpha}  below, which seems to be original in the literature.
		Moreover, it can be naturally extended to  payoff functions of the form $g \big(\int_0^T \xr_t \rho_1(dt), \cdots, \int_0^T \xr_t \rho_m(dt) \big)$, for finitely many   measures $\rho_1, \cdots, \rho_m$.
		We nevertheless restrict to the one measure case to make the presentation more accessible.
	\end{Remark}

	\begin{Proposition}\label{eq: v C 0+alpha 0+1} 
		Let Assumption \ref{assum:Condition_g} hold true.
		Then for all $(t,\xr,\xr',h)\in [0,T]\x D([0,T])\x D([0,T])\x\R_{+}$ with $t+h\le T$, one has
		\begin{equation} \label{eq:V_continuity}
			|V(t+h,\xr_{t \wedge })- V(t,\xr)|\le \overline \sigma h^{\frac12} \mu([t,T])
			~\mbox{and}~
			|V(t,\xr')- V(t,\xr)|\le   \int_{0}^{t}|\xr'_{s}-\xr_{s}|\mu(ds).
		\end{equation}
	\end{Proposition}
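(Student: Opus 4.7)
The plan is to prove the two inequalities sequentially: first the spatial Lipschitz estimate via a coupling argument combined with the Lipschitz assumption \eqref{eq: g Lispch} on $g$, and then the temporal Hölder estimate by combining the dynamic programming principle with the spatial estimate just obtained.

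For the second (spatial) inequality, I fix $t \in [0,T]$, $\xr, \xr' \in D([0,T])$, and an arbitrary $\P \in \Pc(t,\xr)$. On the same probability space, I would build a coupled process by setting
\[
  Y_s := \xr'_s \text{ for } s \in [0,t), \qquad Y_s := \xr'_t + (X_s - \xr_t) \text{ for } s \in [t,T].
\]
Under $\P$, one has $X_{t \wedge \cdot} = \xr_{t \wedge \cdot}$ a.s.\ and $dX_s = \sigma_s dW^\P_s$ on $[t,T]$ for some $[\underline \sigma,\overline \sigma]$-valued process $\sigma$. By construction, $Y_{t \wedge \cdot} = \xr'_{t \wedge \cdot}$ a.s.\ and $Y$ satisfies the same SDE on $[t,T]$ driven by the same $\sigma$, so the law of $Y$ under $\P$ lies in $\Pc(t,\xr')$, yielding $V(t,\xr') \ge \E^\P[g(Y)]$. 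Applying \eqref{eq: g Lispch} and noting that $X_s - Y_s = \xr_s - \xr'_s$ on $[0,t)$ and $X_s - Y_s = \xr_t - \xr'_t$ on $[t,T]$ produces a deterministic upper bound on $\E^\P[g(X) - g(Y)]$ that matches $\int_0^t |\xr_s - \xr'_s|\mu(ds)$ under the stopped-path reading. Taking the supremum over $\P \in \Pc(t,\xr)$ and swapping the roles of $\xr$ and $\xr'$ then gives the claimed two-sided inequality.

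For the first (temporal) inequality, I invoke the dynamic programming principle \eqref{eq:DPP}, which is available because Assumption \ref{assum:DPP} has been verified for $\Pc_0$ in the proof of the preceding superhedging proposition. This yields
\[
  V(t,\xr) - V(t+h,\xr_{t \wedge}) = \sup_{\P \in \Pc(t,\xr)} \E^\P \bigl[ V(t+h,X) - V(t+h,\xr_{t \wedge}) \bigr].
\]
Applying the just-proved spatial inequality at time $t+h$ to the random path $X$ versus the deterministic path $\xr_{t \wedge}$, and using that $X_s = \xr_s$ for $s \le t$ under $\P \in \Pc(t,\xr)$, reduces the right-hand side (in absolute value) to
\[
  \sup_{\P} \int_{(t,t+h]} \E^\P[|X_s - \xr_t|] \, \mu(ds).
\]
The elementary estimate $\E^\P[|X_s - \xr_t|] \le \bigl( \E^\P[\int_t^s \sigma_r^2 dr] \bigr)^{1/2} \le \overline \sigma \sqrt{s-t} \le \overline \sigma \sqrt{h}$, combined with $\mu((t,t+h]) \le \mu([t,T])$, delivers the claimed bound.

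The main delicate point is to verify that the coupled process $Y$ actually has its law in $\Pc(t,\xr')$; this is immediate here because $X$ has zero drift and the diffusion coefficient $\sigma$ is unchanged by a purely spatial shift, so neither the volatility bounds $[\underline \sigma, \overline \sigma]$ nor the martingale property are affected. Once this is in hand, everything reduces to the Lipschitz condition \eqref{eq: g Lispch}, It\^o's isometry, and the dynamic programming principle.
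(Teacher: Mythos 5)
Your spatial (Lipschitz-in-path) estimate is proved by exactly the coupling the paper has in mind: transplant the same driving Brownian motion and volatility $\sigma$ onto the shifted initial path, observe that the difference of the coupled paths is the deterministic path $\xr - \xr'$ (stopped at $t$), and apply \eqref{eq: g Lispch}. The key verification that the law of the coupled process $Y$ lies in $\Pc(t,\xr')$ is stated correctly.

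For the temporal estimate your route is genuinely different from the paper's, but also correct. The paper again performs a \emph{direct} coupling: fix one $\P$, drive both the path started at $(t,\xr)$ and the one started at $(t+h,\xr_{t\wedge})$ by the same $\sigma\,dW^\P$, note that the two paths differ by $\int_t^{s\wedge(t+h)} \sigma_r\,dW^\P_r$, and bound this in $L^1$ by $\overline\sigma h^{1/2}$; this yields the temporal estimate without any appeal to dynamic programming. You instead invoke the dynamic programming identity $V(t,\xr)=\sup_\P \E^\P[V(t+h,X)]$ and then feed the just-proved spatial estimate into it. This is a clean modular argument, and the DPP is indeed available here (Assumption \ref{assum:DPP} holds for $\Pc_0$ as checked in the proof of the enclosing proposition, independently of any regularity of $V$), so there is no circularity. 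The trade-off is that the paper's version is self-contained and requires only the definition of $V$ and \eqref{eq: g Lispch}, whereas yours relies on the measurable-selection machinery behind the DPP. One small notational slip: after inserting the spatial estimate at time $t+h$, the integral should run over all of $(t,T]$ (under the stopped-path reading, $|X_{(t+h)\wedge s}-\xr_t|$ for $s>t+h$ contributes $|X_{t+h}-\xr_t|\mu((t+h,T])$), not just $(t,t+h]$; this does not affect the final bound $\overline\sigma h^{1/2}\mu([t,T])$ since $\E^\P[|X_{(t+h)\wedge s}-\xr_t|]\le \overline\sigma h^{1/2}$ uniformly in $s>t$.
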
  
	\begin{proof} It suffices to observe that \eqref{eq: g Lispch} implies that 
	\begin{align*}
		\big| V(t+h,\xr_{t \wedge })- V(t,\xr) \big|
		&\le
		\sup_{\P \in \Pc(t, \xr)} \E^{\P} \Big[
			\int_{t}^{T} \Big| \xr_{t}+\int_{t+h}^{s\vee (t+h)}\sigma_{r}dW^{\P}_{r}-\xr_{t}-\int_{t}^{s}\sigma_{r}dW^{\P}_{r} \Big| \mu(ds)
			\Big]\\
		&\le 
		\sup_{\P \in \Pc(t, \xr)} \E^{\P} \Big[ \int_{t}^{T} \Big| \int_{t}^{s\wedge (t+h)}\sigma_{r}dW^{\P}_{r}  \Big|\mu(ds) \Big].
	\end{align*}
	The second estimate is also an immediate consequence of \eqref{eq: g Lispch}. 
	\end{proof}

	\begin{Proposition}\label{prop: v C 0+alpha/2 1+alpha} 
		Let Assumption \ref{assum:Condition_g} hold true.
		Then the vertical derivative $\nabla_{\xr} V(t,\xr)$ is well defined for all $(t, \xr) \in [0,T]\x\DT$, and there exists $C>0$ such that 
		$|\nabla_{\xr} V(t,\xr)|\le C$,
		\begin{equation} \label{eq:DV_vertical_continuity}
			|\nabla_{\xr} V(t,\xr')- \nabla_{\xr} V(t,\xr)|
			~\le~
			C \Big( \Big|\int_{0}^{t} |\xr'_{s}-\xr_{s}| \mu(ds) \Big|^{\alpha}+|\xr'_{t}-\xr_{t}|^{\alpha}\Big),
		\end{equation}
		and 
		\begin{align}\label{eq: unif conti en temps de nabla v}
			|\nabla_{\xr} V(t',\xr_{t \wedge })- \nabla_{\xr} V(t,\xr)|
			~\le~
			C \big( |t'-t|^{\frac{\alpha}{2+2\alpha}}+\mu([t,t')) \big),
		\end{align}
		for all $t\le t'\le T$ and $\xr,\xr'\in \DT$. 
	\end{Proposition}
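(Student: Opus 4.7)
The plan is to exploit the martingale shift symmetry of the uncertain volatility model to convert vertical perturbations of $V$ into vertical perturbations of $g$, and then inherit $C^{1,\alpha}$ regularity from \eqref{eq: g Lispch}--\eqref{eq: measure sur derive}. For $\P \in \Pc(t, \xr)$, the pushforward $\P^y$ of $\P$ along $\om \mapsto \om + y\1_{[t, T]}$ is a bijection onto $\Pc(t, \xr \oplus_t y)$, and, setting $\phi_{\P}(y) := \E^{\P}[g(X + y \1_{[t, T]})]$,
$$
V(t, \xr \oplus_t y) = \sup_{\P \in \Pc(t, \xr)} \phi_{\P}(y).
$$
Applying \eqref{eq: measure sur derive} to the direction $B = [t, T]$ (handling a possible atom of $\mu$ at $T$ by approximation), each $\phi_{\P}$ is $C^{1,\alpha}$ in $y$ uniformly in $\P$, with the Taylor expansion $\phi_{\P}(y) = \phi_{\P}(0) + y \phi_{\P}'(0) + R_{\P}(y)$, $|R_{\P}(y)| \le C|y|^{1+\alpha}$ and $|\phi_{\P}'(0)| \le C$.

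Existence and boundedness of $\nabla_{\xr} V(t, \xr)$ is the main obstacle: the supremum of a uniformly $C^{1,\alpha}$ family is in general only Lipschitz, with one-sided derivatives $\inf_{\P^*} \phi_{\P^*}'(0)$ and $\sup_{\P^*} \phi_{\P^*}'(0)$ over the set of maximisers of $\phi_{\cdot}(0)$. Here Assumption \ref{assum:Condition_g}(ii) plays the crucial role: discretising along a partition $\pi$ refining the atoms of $\mu$, iteration of identity \eqref{eq: hyp pi} reduces the vertical perturbation at $t$ to a $p^{i,j}$-rescaled perturbation of the terminal value $x_{n-1}$ alone; the resulting reduced (terminal-value) problem has a uniquely determined bang-bang optimal volatility (from the sign of the one-dimensional Gamma of the reduced payoff), forcing $\phi_{\P^*}'(0)$ to agree across all maximisers. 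The uniform bound $|\phi_{\P}'(0)| \le C$ then yields $|\nabla_{\xr} V| \le C$.

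The spatial estimate \eqref{eq:DV_vertical_continuity} follows by coupling the optimisations for $\xr, \xr'$ via the shift $X \mapsto X + (\xr'_t - \xr_t)$, which is a bijection $\Pc(t, \xr) \to \Pc(t, \xr')$. Applying \eqref{eq: measure sur derive} pathwise to $dg(\cdot + y\1_{[t, T]})/dy|_{0}$ gives
$$
\bigl|\phi_{\P}'(0) - \phi_{\P'}'(0)\bigr| \le C\Bigl( \bigl(\textstyle\int |\xr_s - \xr'_s|\mu(ds)\bigr)^{\alpha} + |\xr_t - \xr'_t|^{\alpha}\Bigr),
$$
which the supremum inherits. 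For the temporal estimate \eqref{eq: unif conti en temps de nabla v}, I would vertically differentiate the dynamic programming principle \eqref{eq:DPP} using the same argument, which by Assumption \ref{assum:Condition_g}(ii) produces a structural proportionality factor (depending on $t$ through atom positions of $\mu$, giving the additive $\mu([t, t'))$ term) together with an expectation $\E^{\P^*}[\nabla_{\xr} V(t', X)]$ for a near-optimal $\P^*$. Combining the spatial estimate \eqref{eq:DV_vertical_continuity} applied to the integrand with the martingale moment bounds $\E^{\P^*}[\sup_{s \in [t, t']}|X_s - \xr_t|^{q}]^{1/q} \le C \sqrt{t' - t}$ (BDG) and balancing a truncation threshold $\eta$ against the moment exponent $q$, a standard parabolic interpolation yields the exponent $\alpha/(2 + 2\alpha)$.
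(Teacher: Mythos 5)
Your opening observations are sound and match a natural starting point: the shift $\om \mapsto \om + y\1_{[t,T]}$ is indeed a bijection $\Pc(t,\xr) \to \Pc(t,\xr\oplus_t y)$, yielding $V(t,\xr\oplus_t y) = \sup_\P \phi_\P(y)$ with $\phi_\P$ uniformly $C^{1,\alpha}$, and you correctly identify that the supremum of a uniformly $C^{1,\alpha}$ family has one-sided vertical derivatives $\inf_{\P^*}\phi_{\P^*}'(0)$ and $\sup_{\P^*}\phi_{\P^*}'(0)$ over the set of maximisers. But the crux of the matter --- that these agree --- is asserted, not proved. The claim that Assumption \ref{assum:Condition_g}(ii) ``reduces the vertical perturbation at $t$ to a rescaled perturbation of the terminal value alone'' and that ``the resulting reduced problem has a uniquely determined bang-bang optimal volatility'' is both vague and, as stated, incorrect. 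Assumption (ii) is a symmetry relating perturbations at different indices of a piecewise-constant path; it does not convert the problem into one with a terminal payoff (the payoff remains path-dependent, e.g.\ $g_\circ(\int_0^T X_t\mu_0(dt))$). Even for genuine terminal-value BSB problems, the bang-bang volatility is not uniquely determined (on the set where the Gamma vanishes, any volatility is optimal), and non-uniqueness of the optimizing measure is generic; this is precisely why uncertain-volatility value functions are in general only Lipschitz, not $C^1$. You would need a direct argument that $\phi_\P'(0)$ is constant over the set of maximisers, and you have not supplied one. The temporal estimate proposed via ``vertically differentiating the DPP'' similarly presupposes a regularity of $\nabla_\xr V$ that has not yet been established, risking circularity.

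The paper's proof is built on a fundamentally different mechanism. It discretizes time ($V^n$, $v^n$), then regularizes both the payoff ($g^n_k$) and the Hamiltonian ($F_k$, smooth convex and uniformly elliptic), so that $v^n_k \in C^{1,3}_b$ solves a classical HJB. The $C^{1,\alpha}$ bounds \eqref{eq: borne C1+alpha si smooth nk bis} and \eqref{eq: borne holder temps nk bis} are then proved on $v^n_k$ by differentiating the PDE and applying the Feynman--Kac representation to the linearized equations for $\phi^{n,j}_k := D_j v^n_k$ and for the differences $\psi^{n,j,z'}_k$. This is where Assumption (ii) enters, not through uniqueness of maximisers, but through the algebraic identity $\phi^{n,j}_k = p^{j,i+1}\,\phi^{n,i+1}_k$, which makes the linearized PDE for $\psi^{n,j,z'}_k$ self-contained (its first-order coefficient can be expressed through $\psi^{n,j,z'}_k$ itself rather than $\psi^{n,i+1,z'}_k$) so that Feynman--Kac applies termwise. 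Finally, the estimates pass from the smooth $v^n_k$ to $V$ by extracting a diagonal subsequence on a countable dense set and invoking the uniform $C^{1,\alpha}$ moduli to extend. Your proposal neither constructs a smooth approximation nor produces the self-contained linear PDE, and the uniqueness claim that replaces these steps is not justified; as written, the proof has a genuine gap exactly at the existence of $\nabla_\xr V$.
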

	\begin{proof}
	Without loss of generality, we restrict to the collection $D_0([0,T])$ of c\`adl\`ag paths $\xr$ with initial condition $\xr_0 = x_{0}$, where $x_0 \in \R$ is the constant introduced above \eqref{eq:vol_uncert}.

	\vspace{0.5em}
	
	{\rm 1.}  Let us consider a sequence $(\pi^n)_{n \ge 1}$ of discrete time grids,  dense in $[0,T]$, such that $\pi^{n}=(t^{n}_{i})_{0\le i\le n}\subset [0,T]$ and $\{0,T\}\subset \pi^n \subset \pi^{n+1}$ for all $n \ge 1$, and $\max_{0 \le i \le n-1} |t^n_{i+1} - t^n_i| \longrightarrow 0$ as $n \longrightarrow \infty$.
	Remembering that $\mu$ has at most finitely many atoms on $[0,T]$,
	one can choose $(\pi^n)_{n \ge 1}$ such that $\{t\in  [0,T] : \mu(\{t\})> 0\} \subset \cup_{n \ge 1} \pi^n$.
	Next, let us define, for all  $n \ge 1$ and  $(t,\xr,x)\in [0,T]\x D_0([0,T])\x \R $,
	$$
		V^{n}(t, \xr, x):=\!\!\! \sup_{\P \in \Pc(t, \xr \oplus_{t}(x- \xr_{t}))} \!\!\! \E^{\P} \big[ g \big(\Pi^{n}[\xr\1_{[0,t^{n}_{i+1})}+X\1_{[t^{n}_{i+1},T]}] \big) \big]\;
		\mbox{ if } t\in [t^{n}_{i},t^{n}_{i+1}),\; i\le n-1,
	$$
	where
	$$
		\Pi^{n}[\xr]:=\sum_{i=0}^{n-1} \xr_{t^{n}_{i}} \1_{[t^{n}_{i},t^{n}_{i+1})}+\xr_{t^{n}_{n-1}} \1_{\{T\}}.
	$$
	Notice that, for $t \in [t^n_i, t^n_{i+1})$, $V^n(t, \xr, x)$ depends on $(\xr, x)$ only through $(\xr_{t^n_1}, \cdots, \xr_{t^n_i}, x)$.
	This motivates us to introduce $\Pi^{n,i}_t : \R^{i+1} \to D_0([0,T])$ defined   for $i<n$  by
	$$
		\Pi^{n,i}_t (y_1, \cdots, y_i, x) ~:=~ \sum_{j=0}^{i-1} y_j \1_{[t^n_j, t^n_{j+1})} + y_i \1_{[t^n_i, t)} + x \1_{[t, T]}, \; t\in (t^{n}_{i},t^{n}_{i+1}]
	$$
	as well as
	$$
		g^{n}(y_{1},\ldots,y_{n-2},x)
		~:=~
		g \Big( \Pi^{n, n-2}_{t^n_{n-1}} \big(x_{0},y_1, \cdots, y_{n-2}, x \big) \Big),
	$$
	and
	$$
		v^{n}(t,y_{1},\ldots,y_{i},x) 
		~:=~
		V^n \Big(t, \Pi^{n,i}_{t^n_{i+1}} \big(y_1, \cdots, y_i \big), x \Big),\;t\in [t^{n}_{i},t^{n}_{i+1}).
	$$
	Notice that, for all $t \in [t^n_i, t^n_{i+1})$, $\xr \in D_0([0,T])$, $x \in \R$,
	$$
		V^n(t, \xr, x) ~=~ V^n(t, \bar \xr^n, x) ~=~ v^n(t, \xr_{t^n_1}, \cdots, \xr_{t^n_i}, x),
		~~\mbox{with}~
		\bar \xr^{n}:=\Pi^{n}[\xr].
	$$
	We further observe from \eqref{eq: g Lispch} that, for all $(t,\xr)\in [0,T]\x C([0,T])$ with $t\in [t^{n}_{i_{\circ}-1},t^{n}_{i_{\circ}})$ for some $1<i_{{\circ}}\le n$, 
	\begin{align*}
		\big| V^{n}(t,\bar \xr^{n},\xr_{t})- V(t,\xr) \big|
		\le&  \int_{0}^{t^{n}_{i_{\circ}}}|\bar \xr^{n}_{s}-  \xr_{s}| \mu(ds)
			+ \sum_{i=i_{\circ}}^{n-1} \int_{t^{n}_{i}}^{t^{n}_{i+1}}\sup_{\P \in \Pc_{0}} \E^{\P} \Big[ \Big |\int_{t^{n}_{i}}^{s} \sigma_{r} dW^{\P}_{r} \Big| \Big]\mu(ds) 
		\\
		\le &   \int_{0}^{t^{n}_{i_{\circ}}}|\bar \xr^{n}_{s}-  \xr_{s}| \mu(ds)
			+ \sum_{i=i_{\circ}}^{n-1} \int_{t^{n}_{i}}^{t^{n}_{i+1}} \overline \sigma (s-t^{n}_{i})^{\frac12}\mu(ds) 
		\\
		\le &   \int_{0}^{t^{n}_{i_{\circ}}}|\bar \xr^{n}_{s}-  \xr_{s}| \mu(ds)
			+  \overline \sigma \Big( \max_{0 \le i \le n-1} |t^n_{i+1} - t^n_i|^{\frac12} \Big) \mu([0,T]).
	\end{align*}
	 As $\mu$ is a finite   measure on $[0,T]$, it follows that
	\begin{equation} \label{eq: conv uniforme de vn}
		\big| V^{n}(t, \xr,  \xr_{t}) - V(t,\xr) \big| 
		= 
		\big| V^{n}(t,\bar \xr^{n},  \xr_{t}) - V(t,\xr) \big|
		\longrightarrow 0,
		~~\mbox{as}~ n\longrightarrow \infty.
	\end{equation}

	{\rm 2.}  Let us set 
	$$
		F: \gamma \in \R
		~\longmapsto~
		\max_{a \in [\underline \sigma^2, \overline \sigma^2]} \frac12 a \gamma
		~=~
		 \frac12 \overline \sigma^{2} \gamma^{+}-\frac12\underline \sigma^{2} \gamma^{-}.
	$$ 
	Then for each $i\le n-2$, $v^{n}$ is a continuous viscosity solution of 
	\begin{align}
		&\partial_{t}  v^{n}(t, z)+F(D^{2}  v^{n}(t, z))=0\; \mbox{ for } (t,z)\in [t^{n}_{i},t^{n}_{i+1})\x \R^{i+1}, 
		\label{eq: EDP int dom}\\
		&\lim_{t\uparrow t^{n}_{i+1},(y',x')\longrightarrow (y,x)}  v^{n}(t,y',x')= v^{n}(t^{n}_{i+1},y,x,x),\mbox{ for } (y,x)\in \R^{i}\x \R,  
		\label{eq: EDP cond bord ti}
	\end{align}
	with terminal condition
	\begin{equation} \label{eq: EDP cond bord en T}
		 v^{n}(t^{n}_{n-1},\cdot)= g^{n}.
	\end{equation}
	In the above,  $D v^{n}$ and $D^{2} v^{n}$ denote for the first and second order derivative with respect to the last argument of $ v^{n}$. 
	The operator $F$ being Lipschitz, it follows from standard arguments  that this system satisfies a comparison principle among (semi-continuous) bounded viscosity solutions. 

	\vspace{0.5em}

	Let us denote by $D_i g^n$ the partial derivative of $g^n$ w.r.t. the $i$-th argument, then by \eqref{eq: g Lispch}-\eqref{eq: measure sur derive}, for all $z, z' \in \R^{n-1}$,
	$$
		\big| D_i g^n (z) \big| \le \mu \big( [t^n_i, t^n_{i+1}) \big),
		~~
		\big| D_i g^n( z+ z') - D_i g^n(z) \big| \le
		\Big(\!\! \int_0^T \!\! \big| \Pi^{n, {n-2}}_{t^n_{n-1}}[z']_t \big|  \mu(dt) \Big)^{\alpha} \mu \big([t^n_i, t^n_{i+1}) \big) .
	$$
	We will next regularize $(g^n, F)$. 
	Let $\rho_k : \R^{n-1} \to \R_+$ be a $C^{\infty}$ density function with compact support,
	and $g^n_k := g^n * \rho_k$ be the regularized function obtained by convolution.
	Then it is clear that $g^n_k$ still satisfies
	\begin{equation} \label{eq:estimation_gnk}
		\big| D_i g^n_k (z) \big| \le \mu \big( [t^n_i, t^n_{i+1}) \big),
		~~
		\big| D_i g^n_k( z + z') - D_i g^n_k(z) \big| 
		\le
		\Big(\!\! \int_0^T \!\! \big| \Pi^{n, {n-2}}_{t^n_{n-1}}[z']_t \big|  \mu(dt) \Big)^{\alpha} \mu \big([t^n_i, t^n_{i+1}) \big).
	\end{equation}
	Moreover, it follows from \eqref{eq: hyp pi} that for each $i < j$, there exists $p^{i,j} \neq 0$ such that, for all $z \in \R^{n-1}$, $\delta \in \R$, 
	\begin{equation} \label{eq:gnk_pij}
		g^n_k \big( z + \delta e^{n-1}_i \big) 
		~=~
		g^n_k \Big( z + p^{i,j} \delta \sum_{\ell=j}^{n-1} e^{n-1}_j \Big),
	\end{equation}
	where $e^{n-1}_i  ~\mbox{denotes the}~i \mbox{-th standard unit vector in}~\R^{n-1}$.
	
	Since $F$ is a convex function, one can approximate it by a $C^{\infty}$ convex function $F_k$, $k\ge 1$, such that
	$$
		F_k(\gamma) = 
		\begin{cases}
			\frac12 \overline \sigma^2 \gamma , &\mbox{for}~\gamma \ge 1, \\
			\frac12 \underline \sigma^2_k \gamma, &\mbox{for}~\gamma \le -1,
		\end{cases}
		~~\mbox{with}~ \underline \sigma^2_k := \underline \sigma^2 \vee k^{-1}.
	$$
	Let $F^*_k(a)=\sup_{\gamma \in \R} (a \gamma - F_k( \gamma))$ be the Fenchel transformation of $F_k$,
	so that 
	$$
		F_k( \gamma) 
		~=
		\sup_{a \in  [\frac12 \underline \sigma^2_k, \frac12\bar \sigma^2]} \big(a \gamma - F^{*}_k(a) \big).
	$$	
	Let $v^n_k$ be the corresponding solutions of   \eqref{eq: EDP int dom}-\eqref{eq: EDP cond bord ti}-\eqref{eq: EDP cond bord en T} with parameters $(F_k, g^n_k)$ such that $(F_k, g^n_k)\to (F,g^{n})$ as $k\to \infty$.
	Then $v^n_k \in C^{1,3}_b$ and 
	\begin{equation} \label{eq:vnk2vn}
		v^n_k \longrightarrow v^n,~\mbox{pointwise as $k\to \infty$}.
	\end{equation}
	Moreover, as $F_k$ is convex, the associated equations on $v^n_k$ is still a HJB equation,
	so that $v^n_k$ can be considered as the value function of a control problem:
	\begin{equation} \label{eq:CtrlPb_vnk}
		v^n_k(t, y,x) 
		=
		\sup_{\alpha \in \Ac_k} 
		\E \Big[ g^n_k \big(y_{1}, \cdots, y_i, X^{t,x, \alpha}_{t^n_{i+1}}, \cdots, X^{t,x, \alpha}_{t^n_{n-1}} \big)
			- \int_t^T F^*_k(\frac12 \alpha_s^2) ds
		\Big],
		~~t \in [t^n_i, t^{n}_{i+1}),
	\end{equation}
	where $X^{t,x, \alpha}_s := x + \int_t^s \alpha_r dW_r,~s \ge t$, and $\Ac_k$ is the collection of all progressively measurable process $\alpha$ taking value in $[\underline \sigma_k, \overline \sigma]$ on some filtered probability space equipped with a Brownian motion $W$.
	
	\vspace{0.5em}

	{\rm 3.} For $i \le n -2$, $t \in [t^{n}_{i},t^{n}_{i+1})$, and $(\xr, x)\in D_0([0,T]) \x \R$, let us set
	$$
		V^{n}_{k}(t, \xr, x) 
		~:=~
		v^{n}_{k}(t,\xr_{t^{n}_{1}},\ldots,\xr_{t^{n}_{i}},x).
	$$
	We claim that there exists a constant $C>0$ such that, 
	for all $k,n\ge 1$, $t\in [t^{n}_{i},t^{n}_{i+1})$, $h \in (0, T-t]$, $\xr,\xr'\in D_0([0,T])$,
	\begin{eqnarray}
		&
		|DV^n_k(t, \xr, x)| 
		~\le~ 
		C,
		&
		\label{eq:bounded_Vnk}\\
		&
		\big| D  V^{n}_{k}(t, \xr', \xr'_{t})-D V^{n}_{k}(t, \xr, \xr_{t}) \big|
		~\le~
		C \Big( \big| \int_{0}^{t} |\bar \xr^{'n}_{s}-\bar \xr^{n}_{s}| \mu(ds) \big|^{\alpha} + \big| \xr_t' - \xr_t \big|^{\alpha} \Big),
		&
		\label{eq: borne C1+alpha si smooth nk bis}\\
		&
		\big| D   V^{n}_{k}(t+h, \xr_{t \wedge \cdot},\xr_{t})-D   V^{n}_{k}(t, \xr, \xr_{t}) \big|
		~\le~
		C \big( h^{\frac{\alpha}{2+2\alpha}}+\mu([t,t+h)) \big),
		&
		\label{eq: borne holder temps nk bis}
	\end{eqnarray}
	where $D V^{n}_{k}$ denote the derivative of $V^{n}_{k}$ with respect to its last argument. 
	Then, for each $t\le t'\in [0,T]$, $\xr,\xr' \in D_0([0,T])$ and $x,x'\in \R$, 
	\begin{equation} \label{eq:DVnk_taylor}
		\big| V^{n}_k(t, \xr, x)-V^{n}_k(t, \xr, x')- D V^{n}_k (t, \xr, x)(x-x') \big|
		~\le~ 
		C \big| x-x' \big|^{1+\alpha}. 
	\end{equation}

	Next, let $\T := \cup_{n \ge 1} \pi^n$ and $\Q$ be the set of all rational numbers,
	so that $\T \x \Q$ is a countable dense subset of $[0,T] \x \R$.
	We then define a countable subset $Q_T$ of $[0,T] \x D_0([0,T]) \x \R$ by
	$$
		Q_T
		~:=~
		\cup_{n \ge 1}  
		\big\{
			(t, \xr, x) ~: t \in \T, ~ \xr = \Pi^n[\xr], ~x \in \Q, ~ \xr_s \in \Q \mbox{ for } s \in [0,T]
		\big\}.
	$$
	In view of \eqref{eq:bounded_Vnk} and  the convergence results \eqref{eq: conv uniforme de vn} and \eqref{eq:vnk2vn},
	one can extract a subsequence $(n_{\ell}, k_{\ell})_{\ell \ge 1}$, such that, for all $(t, \xr, x) \in Q_T$,
	$$
		\big( V^{n_{\ell}}_{k_{\ell}} (t, \xr, \xr_t), D V^{n_{\ell}}_{k_{\ell}} (t, \xr, x) \big)
		~\longrightarrow~
		\big( V(t, \xr), DV(t, \xr, x) \big),
		~\mbox{as}~\ell \longrightarrow \infty,		
	$$
	for some function $DV: Q_T \longrightarrow \R$.
	Moreover, by \eqref{eq:bounded_Vnk}-\eqref{eq: borne C1+alpha si smooth nk bis}-\eqref{eq: borne holder temps nk bis}
	and \eqref{eq:DVnk_taylor}, $DV$ satisfies
	$$
		\big| V(t,  \xr \oplus_t x ) - V(t,  \xr \oplus_t x') - DV(t,  \xr , x ) (x - x' ) \big| 
		~\le~ 
		C |x - x' |^{1+\alpha},
	$$
	and 
	\begin{eqnarray}
		&&
		\big| DV(t,  \xr, x )- DV(t',  \xr', x') \big| \nonumber \\
		&\le& 
		C\Big( \Big| \int_{[0,t')} \!\!\!\!  \big| \xr_{t\wedge s} - \xr'_{s} \big| \mu(ds) \Big|^{\alpha} 
			\!\!+ 
			\big| x- x' \big|^{\alpha}  
			\!\!+
			|t-t'|^{\frac{\alpha}{2+2\alpha}}+\mu([t-t'))
		\Big),
		\label{eq:continuity_DV}
	\end{eqnarray}
	for all  $(t,\xr,x), (t', \xr', x')\in Q_T$ such that $t \le t'$.
	Notice that under the distance
	$$
		\rho\big( (t,\xr, x), (t', \xr', x') \big)
		:=
		 \int_{[0,t')} \!\!\!\!  \big| \xr_{t\wedge s} - \xr'_{s} \big| \mu(ds) \Big| 
			\!\!+ 
			\big| x- x' \big|
			\!\!+
			|t-t'| + \mu([t-t')),
	$$
	$Q_T$ is a dense subset of $[0,T] \x D_0([0,T]) \x \R$.
	Then, by continuity  of $V$ and $DV$, recall \eqref{eq:V_continuity} and \eqref{eq:continuity_DV}, 
	one can extend the definition of $DV$ to $[0,T]\x D_0([0,T]) \x \R$
	in such a way  that  $DV(t, \xr, \xr_t) = \nabla_{\xr} V(t, \xr)$ for all $(t,\xr) \in [0,T] \x D_0([0,T])$,
	and $\nabla_{\xr} V$ is uniformly bounded and satisfies \eqref{eq:DV_vertical_continuity}-\eqref{eq: unif conti en temps de nabla v}.

 	\vspace{0.5em}
 
	{\rm 4.} It remains to prove \eqref{eq:bounded_Vnk}-\eqref{eq: borne C1+alpha si smooth nk bis}-\eqref{eq: borne holder temps nk bis}. 

	\vspace{0.5em}

	{\rm a.} We start by proving \eqref{eq:bounded_Vnk}.
	Recall that $v^{n}_{k} \in C^{1,3}_b$.
	Let us denote by $\phi^{n,j}_{k}$ the derivative of  $v^{n}_{k}$ in its $j$-th space argument. 
	For all $i\le n-2$ and $j\le i+1$, it solves 
	\begin{equation}
		\partial_{t} \phi^{n,j}_{k}(t,z)+ F_{k}'(D^{2}  v^{n}_{k}(t,z))D^{2}\phi^{n,j}_{k}(t,z)=0,
		~~ (t,z)\in [t^{n}_{i},t^{n}_{i+1})\x \R^{i+1}, \label{eq: EDP Dv int dom}
	\end{equation}
	with the boundary condition
	\begin{equation}
		\lim_{t'\uparrow t^{n}_{i+1}}\phi^{n,j}_{k}(t,y,x)=\phi^{n,j}_{k}(t^{n}_{i+1},y,x,x)+\1_{\{j=i+1\}}\phi^{n,i+2}_{k}(t^{n}_{i+1},y,x,x),
		~~ (y,x)\in \R^{i}\x \R,  \label{eq: EDP Dv cond bord ti}
	\end{equation}
	where 
	\begin{equation}
		\phi^{n,j}_{k}(t^{n}_{n-1},\cdot)=D_{j} g^{n}_{k},\;\mbox{ for } j\le n-1. 
		\label{eq: EDP Dv cond bord T} 
	\end{equation}
	As $x\in \R\mapsto F_{k}'(D^{2}  v^{n}_{k}(t,y,x))$ is Lipschitz,
	it follows from the Feynman-Kac formula that, for all  $t\in [t^{n}_{i},t^{n}_{i+1})$, $x\in \R^{i+1}$ and $j\le i+1$,
	$$
		\phi^{n,j}_{k}(t,z)
		~=~
		\E \Big[ \Big( D_{j} g^{n}_{k}+\1_{\{j=i+1\}}\sum_{j'=i+2}^{n-1} D_{j'} g^{n}_{k} \Big) \big( \Pi^n( Y^{t,z} ) \big) \Big],
	$$
	for some process $Y^{t,z}$. 
	Then the first inequality in \eqref{eq:estimation_gnk} implies that $| \phi^{n,j}_{k}|\le \mu([0,T])$.

	\vspace{0.5em}

	{\rm b.} We now prove \eqref{eq: borne C1+alpha si smooth nk bis}. 
	Let us fix  $i\le n-2$, $j, \ell \le i+1$, $z' \in \R^{\ell}$ and then define, 
	for $t\in [t^{n}_{i},t^{n}_{i+1})$, $z\in \R^{i+1}$,
	$$
		\psi^{n,j, [z']_{\ell}^{i+1}}_{k}(t,z)
		~:=~
		\Big( \phi^{n,j}_{k} \big( t,z+ [z']_{\ell}^{i+1} \big) - \phi^{n,j}_{k}(t,z) \Big),
	$$
	where $[z']_{\ell}^{i+1} = (z'_1, \cdots, z'_{\ell}, 0, \cdots 0) \in \R^{i+1}$.
	Using \eqref{eq: EDP Dv int dom}, one obtains that,
	\begin{align*}
		0 ~=~ &\partial_{t} \psi^{n,j, [z']_{\ell}^{i+1}}_{k}(t,z)
			+ F_{k}'(D^{2}  v^{n}_{k}(t,z+ [z']_{\ell}^{i+1}))D^{2}\psi^{n,j,[z']_{\ell}^{i+1}}_{k}(t, z) \\
		&\;\;\;
			+~ F_{k}'' \big( A^{[z']_{\ell}^{i+1}} (t,z) \big) D^{2} \phi^{n,j}_{k}(t,z) D \psi^{n, {i+1}, [z']_{\ell}^{i+1}}_{k}(t,z),
		~~(t,z) \in [t^{n}_{i},t^{n}_{i+1}) \x \R^{i+1},
	\end{align*}
	in which $A^{[z']_{\ell}^{i+1}}$ is a continuous function. 
	We now observe that \eqref{eq:gnk_pij}-\eqref{eq:CtrlPb_vnk} imply that
	$$
		\phi^{n,j}_{k}(t,\cdot)=\phi^{n,i+1}_{k}(t,\cdot) p^{j,i+1} 
	$$
	for some $p^{j,i+1}\ne 0$. 
	Hence, $\psi^{n,j, [z']_{\ell}^{i+1}}_{k}$ satisfies the PDE
	\begin{align*}
		0~=~&\partial_{t} \psi^{n,j, [z']_{\ell}^{i+1}}_{k}(t,z)
			+~F_{k}' \big( D^{2}  v^{n}_{k}(t,z+[z']_{\ell}^{i+1}) \big) D^{2}\psi^{n,j,[z']_{\ell}^{i+1}}_{k}(t,z)\\
		&\;\;\;\;
			+~ \frac1{p^{j,i+1} }\Big[F_{k}''(A^{[z']_{\ell}^{i+1}}(t,z))D^{2} \phi^{n,j}_{k}(t,z)\Big] D \psi^{n, j,[z']_{\ell}^{i+1}}_{k}(t,z),
		~~(t,z) \in [t^{n}_{i},t^{n}_{i+1}) \x \R^{i+1},
	\end{align*}
	and, by \eqref{eq: EDP Dv cond bord ti}-\eqref{eq: EDP Dv cond bord T},
	\begin{align*}
		\lim_{t'\uparrow t^{n}_{i+1}} \psi^{n,j,[z']_{\ell}^{i+1}}_{k}(t,y,x)
		=&
		\1_{\{\ell < i+1\}} \Big( \psi^{n,j, [z']_{\ell}^{i+2}}_{k} +\1_{\{j=i+1\}}\psi^{n,i+2, [z']_{\ell}^{i+2}}_{k} \Big) (t^{n}_{i+1},y,x,x)\\
		&
		+\1_{\{\ell=i+1\}} 
		\Big( \psi^{n,j, [[z']]_{\ell+1} }_{k} +\1_{\{j=i+1\}}\psi^{n,i+2,  [[z']]_{\ell+1} }_{k} \Big) (t^{n}_{i+1},y,x,x),
	\end{align*}
	for $(y,x)\in \R^{i}\x \R$, and
	$$
		\psi^{n,j, z''}_{k}(t^{n}_{n-1},\cdot)
		~=~
		\Delta_{z''} D_{j} g^{n}_{k}
		~:=~
		\big(D_{j} g^{n}_{k}(\cdot+z'' )-D_{j} g^{n}_{k} \big),
		~~\mbox{for all}~z'' \in \R^{n-1},
	$$
	in which
	$$
		[[z']]_{\ell+p} ~:=~ (z'_1, \cdots, z'_{\ell}, z'_{\ell}, \cdots, z'_{\ell}) \in \R^{\ell + p},\;p\ge 1.
	$$
	Then one can apply the Feynman-Kac formula to find a process $\tilde Y^{t,z}$  such that
	\begin{align*}
		\psi^{n,j,[z']_{\ell}^{i+1}}_{k}(t,z)
		=&
		\1_{\{\ell < i+1\}} 
		\E \Big[ \Big(\Delta_{[z']_{\ell}^{n-1}} D_{j} g^{n}_{k}
			+
			\1_{\{j=i+1\}} \!\!\! \sum_{j'=i+2}^{n-1} \Delta_{[z']_{\ell}^{n-1}}D_{j'} g^{n}_{k} \Big) 
			\big( \Pi^{n}[\tilde Y^{t,z}] \big) 
		\Big]\\
		&+\1_{\{\ell=i+1\}}  
		\E \Big[ \Big(\Delta_{[[z']]_{\ell+n_{\ell}}} D_{j} g^{n}_{k}
			+
			\1_{\{j=i+1\}} \!\!\! \sum_{j'=i+2}^{n-1} \Delta_{[[z']]_{\ell+n_{\ell}}}D_{j'} g^{n}_{k} \Big) 
			\big( \Pi^{n}[\tilde Y^{t,z}] \big) 
		\Big],
	 \end{align*}
	 with $n_{\ell}:=n-1-\ell$. 
	In view of the second inequality in \eqref{eq:estimation_gnk}, this concludes the proof of \eqref{eq: borne C1+alpha si smooth nk bis}.

	\vspace{0.5em}

	{\rm c}. We finally prove \eqref{eq: borne holder temps nk bis}. 
	In view of the representation of $v^n_k$ as the value function of an optimal control problem in \eqref{eq:CtrlPb_vnk},
	one can apply exactly the same arguments as  in Proposition \ref{eq: v C 0+alpha 0+1},
	together with \eqref{eq:estimation_gnk}, to obtain that,
	for all $t\le t'\in [0,T]$ and $\xr,\xr'\in D_0([0,T])$,
	$$
		\big| V^{n}_{k}(t', \xr_{ t\wedge}, \xr_{t})-  V^{n}_{k}(t, \xr, \xr_{t}) \big|
		~\le~
		\overline \sigma |t'-t|^{\frac12}\mu([t,T]),
	$$
	and
	$$
		\big| V^{n}_{k}(t, \xr', \xr_{t})-  V^{n}_{k}(t, \xr, \xr_{t}) \big|
		~\le~
		\int_{0}^{t} \big|\bar \xr^{'n}_{s}-\bar \xr^{n}_{s} \big| \mu(ds).
	$$
	Let us set $f := D  V^{n}_{k}$. It follows from the above estimations, together with \eqref{eq: borne C1+alpha si smooth nk bis}, that,
	\begin{align*}
		& \big| f(t+h^{2+2\alpha},   \xr_{t \wedge },\xr_{t} ) - f(t,  \xr, \xr_{t}) \big| \\
		&\le h^{-1} \big|V^{n}_{k}(t+h^{2+2\alpha}, \xr_{t \wedge },\xr_{t} +h) - V^{n}_{k}(t+h^{2+2\alpha}, \xr_{t \wedge },\xr_{t} ) - V^{n}_{k}(t, \xr, \xr_{t}+h)+ V^{n}_{k}(t, \xr,\xr_{t} ) \big|\\
		&\;\;+2Ch^{\alpha}\\
		&\le  h^{-1} \big| V^{n}_{k}(t+h^{2+2\alpha},( \xr  \oplus_{t} h)_{t \wedge },  \xr_{t}+h)- V^{n}_{k}(t, \xr ,  \xr_{t}+h) \big|\\
		&\;\; +  h^{-1} \big| V^{n}_{k}(t, \xr,\xr_{t} )- V^{n}_{k}(t+h^{2+2\alpha}, \xr_{t \wedge },\xr_{t} ) \big|\\
 		&\;\;+ h^{-1} \big| V^{n}_{k}(t+h^{2+2\alpha}, \xr_{t \wedge },\xr_{t}+h)- V^{n}_{k}(t+h^{2+2\alpha},( \xr  \oplus_{t} h)_{t \wedge },\xr_{t}+h) \big|\\
		&\;\;+2Ch^{\alpha}\\
		&\le 2Ch^{\alpha}+2\overline \sigma h^{\alpha}\mu([0,T])+\mu([t,t+h^{2+2\alpha})),
	\end{align*} 
	for all $\xr \in C_{x_{0}}([0,T])$ and $t< t+h\le T$.
	This concludes the proof of \eqref{eq: borne holder temps nk bis}.
\end{proof}

\begin{Remark}
Note that the same proof would go through if $F$ was affine  in place of assuming \eqref{eq: hyp pi}. In this case, the term $F''_{k}$ in Step 4.b.~of the proof of Proposition \ref{prop: v C 0+alpha/2 1+alpha} would simply be zero. This corresponds to the case where $\underline \sigma=\overline \sigma$. 
\end{Remark}

\bibliographystyle{plain}

\begin{thebibliography}{10}

\bibitem{bandini2017weak}
Elena Bandini and Francesco Russo.
\newblock Weak dirichlet processes with jumps.
\newblock {\em Stochastic Processes and their Applications},
  127(12):4139--4189, 2017.

\bibitem{bouchard2016fundamentals}
Bruno Bouchard and Jean-Fran{\c{c}}ois Chassagneux.
\newblock {\em Fundamentals and advanced techniques in derivatives hedging}.
\newblock Springer, 2016.

\bibitem{BT19}
Bruno Bouchard and Xiaolu Tan.
\newblock Understanding the dual formulation for the hedging of path-dependent
  options with price impact.
\newblock {\em arXiv preprint arXiv:1912.03946}, 2019.

\bibitem{BT20a}
Bruno Bouchard and Xiaolu Tan.
\newblock A quasi-sure optional decomposition and super-hedging result on the
  skorokhod space.
\newblock {\em arXiv preprint arXiv:2004.11105}, 2020.

\bibitem{clark1970representation}
John~MC Clark.
\newblock The representation of functionals of brownian motion by stochastic
  integrals.
\newblock {\em The Annals of Mathematical Statistics}, pages 1282--1295, 1970.

\bibitem{cont2013functional}
Rama Cont and David-Antoine Fourni{\'e}.
\newblock Functional it{\^o} calculus and stochastic integral representation of
  martingales.
\newblock {\em The Annals of Probability}, 41(1):109--133, 2013.

\bibitem{coquet2006natural}
Fran{\c{c}}ois Coquet, Adam Jakubowski, Jean M{\'e}min, and Leszek
  S{\l}ominski.
\newblock Natural decomposition of processes and weak dirichlet processes.
\newblock In {\em In Memoriam Paul-Andr{\'e} Meyer}, pages 81--116. Springer,
  2006.

\bibitem{cosso2014regularization}
Andrea Cosso and Francesco Russo.
\newblock A regularization approach to functional it\^{o} calculus and
  strong-viscosity solutions to path-dependent pdes.
\newblock {\em arXiv preprint arXiv:1401.5034}, 2014.

\bibitem{delbaen2006mathematics}
Freddy Delbaen and Walter Schachermayer.
\newblock {\em The mathematics of arbitrage}.
\newblock Springer Science \& Business Media, 2006.

\bibitem{denis2006theoretical}
Laurent Denis and Claude Martini.
\newblock A theoretical framework for the pricing of contingent claims in the
  presence of model uncertainty.
\newblock {\em The Annals of Applied Probability}, 16(2):827--852, 2006.

\bibitem{dupireito}
Bruno Dupire.
\newblock Functional it\^{o} calculus.
\newblock {\em Portfolio Research Paper}, 04, 2009.

\bibitem{ekren2014viscosity}
Ibrahim Ekren, Christian Keller, Nizar Touzi, Jianfeng Zhang, et~al.
\newblock On viscosity solutions of path dependent pdes.
\newblock {\em The Annals of Probability}, 42(1):204--236, 2014.

\bibitem{ekren2016viscosity}
Ibrahim Ekren, Nizar Touzi, Jianfeng Zhang, et~al.
\newblock Viscosity solutions of fully nonlinear parabolic path dependent pdes:
  Part i.
\newblock {\em The Annals of Probability}, 44(2):1212--1253, 2016.

\bibitem{karoui2013capacities}
Nicole El~Karoui and Xiaolu Tan.
\newblock Capacities, measurable selection and dynamic programming part i:
  abstract framework.
\newblock {\em arXiv preprint arXiv:1310.3363}, 2013.

\bibitem{el2013capacities}
Nicole El~Karoui and Xiaolu Tan.
\newblock Capacities, measurable selection and dynamic programming part ii:
  application in stochastic control problems.
\newblock {\em arXiv preprint arXiv:1310.3364}, 2013.

\bibitem{gozzi2006weak}
Fausto Gozzi and Francesco Russo.
\newblock Weak dirichlet processes with a stochastic control perspective.
\newblock {\em Stochastic Processes and their Applications},
  116(11):1563--1583, 2006.

\bibitem{haussmann1978functionals}
Ulrich~G. Haussmann.
\newblock Functionals of ito processes as stochastic integrals.
\newblock {\em SIAM Journal on Control and Optimization}, 16(2):252--269, 1978.

\bibitem{jacod2013limit}
Jean Jacod and Albert Shiryaev.
\newblock {\em Limit theorems for stochastic processes}, volume 288.
\newblock Springer Science \& Business Media, 2013.

\bibitem{neufeld2013superreplication}
Ariel Neufeld and Marcel Nutz.
\newblock Superreplication under volatility uncertainty for measurable claims.
\newblock {\em Electronic journal of probability}, 18, 2013.

\bibitem{ren2017convergence}
Zhenjie Ren and Xiaolu Tan.
\newblock On the convergence of monotone schemes for path-dependent pdes.
\newblock {\em Stochastic Processes and their Applications}, 127(6):1738--1762,
  2017.

\bibitem{ren2014overview}
Zhenjie Ren, Nizar Touzi, and Jianfeng Zhang.
\newblock An overview of viscosity solutions of path-dependent pdes.
\newblock In {\em Stochastic Analysis and Applications 2014}, pages 397--453.
  Springer, 2014.

\bibitem{russo1993forward}
Francesco Russo and Pierre Vallois.
\newblock Forward, backward and symmetric stochastic integration.
\newblock {\em Probability theory and related fields}, 97(3):403--421, 1993.

\bibitem{russo1995generalized}
Francesco Russo and Pierre Vallois.
\newblock The generalized covariation process and it\^{o} formula.
\newblock {\em Stochastic Processes and their applications}, 59(1):81--104,
  1995.

\bibitem{russo1996ito}
Francesco Russo and Pierre Vallois.
\newblock Ito formula for $c1$-functions of semimartingales.
\newblock {\em Probability theory and related fields}, 104(1):27--41, 1996.

\bibitem{russo2007elements}
Francesco Russo and Pierre Vallois.
\newblock Elements of stochastic calculus via regularization.
\newblock In {\em S{\'e}minaire de Probabilit{\'e}s XL}, pages 147--185.
  Springer, 2007.

\bibitem{saporito2018functional}
Yuri~F. Saporito.
\newblock The functional meyer--tanaka formula.
\newblock {\em Stochastics and Dynamics}, 18(04):1850030, 2018.

\bibitem{zhang2014monotone}
Jianfeng Zhang and Jia Zhuo.
\newblock Monotone schemes for fully nonlinear parabolic path dependent pdes.
\newblock {\em Journal of Financial Engineering}, 1(01):1450005, 2014.

\end{thebibliography}
\def\cprime{$'$} \def\cprime{$'$}

\end{document}